\numberwithin{equation}{section} 
\newcommand*\rmd{\mathop{}\!\mathrm{d}}
\newcommand{\rme}{{\mathrm{e}}}
\newcommand{\supp}{\mathrm{supp}}
\newcommand{\tr}{\mathrm{tr}}
\newcommand{\mbr}{\mathbb{R}}
\newcommand{\mbt}{\mathbb{T}}
\newcommand{\mbs}{\mathbb{S}}
\newcommand{\mbn}{\mathbb{N}}
\newcommand{\mbz}{\mathbb{Z}}
\newcommand{\mbq}{\mathbb{Q}}
\newcommand{\mbc}{\mathbb{C}}
\newcommand{\mbw}{\mathbb{W}}
\newcommand{\mbp}{\mathbb{P}}
\newcommand{\pt}{\partial_t}
\newcommand{\Linfone}[1]{ \Big\| #1 \Big\|_{L_t^\infty L_x^1}}  
\newcommand{\linfone}[1]{ \| #1 \|_{L_t^\infty L_x^1}}			%
\newcommand{\linftwo}[1]{ \| #1 \|_{L_t^\infty L_x^2}}			%
\newcommand{\Linfp}[1]{ \Big\| #1 \Big\|_{L_t^\infty L_x^p}}	
\newcommand{\linfp}[1]{ \| #1 \|_{L_t^\infty L_x^p}}		
\newcommand{\Linfinf}[1]{ \Big\| #1 \Big\|_{L_t^\infty L_x^\infty}}	
\newcommand{\linfinf}[1]{ \| #1 \|_{L_t^\infty L_x^\infty}}
\newcommand{\linftwopsq}[1]{\| #1 \|_{L_t^\infty L_x^{2p}}^2 }
\newcommand{\linftwop}[1]{\| #1 \|_{L_t^\infty L_x^{2p}} }
\newcommand{\CN}[1]{\left\|#1\right\|_{C_{t,x}^N}}
\newcommand{\cN}[1]{\|#1\|_{C_{t,x}^N}}
\newcommand{\cNN}[2]{\|#1\|_{C_{t,x}^{#2}}}
\newcommand{\CNN}[2]{\left\|#1\right\|_{C_{t,x}^{#2}}}
\newcommand{\Cone}[1]{\left\|#1\right\|_{C_{t,x}^1}}
\newcommand{\opR}{\mathcal{R}}
\newcommand{\mce}[1]{\mathcal{E}_{#1}}
\newcommand{\divg}{\nabla\!\!\cdot\!}
\newcommand{\perpdot}{\nabla^\perp\!\!\cdot}
\newcommand{\wqp}{w^{(\mathrm{p})}}
\newcommand{\wqc}{w^{(\mathrm{c})}}
\newcommand{\wqt}{w^{(\mathrm{t})}}
\newcommand{\sumk}{\sum_{\dir \in \Lambda}}
\newcommand{\sumkk}{\sum_{\dir,\dir'\in \Lambda}}
\newcommand{\Rl}{R_{\mathrm{linear}}}
\newcommand{\Rc}{R_{\mathrm{corrector}}}
\newcommand{\Ro}{R_{\mathrm{oscillation}}}
\newcommand{\Rm}{\oR_{\mathrm{commutator}}}
\newcommand{\Nrang}{N =1,2,3}
\newcommand{\ls}{\lesssim}
\newcommand{\lamq}{\lambda_q}
\newcommand{\lamqp}{\lambda_{q+1}}
\newcommand{\oR}{\mathring{R}}
\newcommand{\ootimes}{\mathring{\otimes}}
\newcommand{\Rls}{\oR_\ell^*}
\newcommand{\dir}{\xi}
\newcommand{\id}{I\!d}
\newcommand{\dM}{\mathcal{M}}
\def\Xint#1{\mathchoice
	{\XXint\displaystyle\textstyle{#1}}%
	{\XXint\textstyle\scriptstyle{#1}}%
	{\XXint\scriptstyle\scriptscriptstyle{#1}}%
	{\XXint\scriptscriptstyle\scriptscriptstyle{#1}}%
	\!\int}
\def\XXint#1#2#3{{\setbox0=\hbox{$#1{#2#3}{\int}$ }
		\vcenter{\hbox{$#2#3$ }}\kern-.6\wd0}}
\def\aint{\Xint{\diagup}}
\newtheorem{lem}{Lemma}[section]
\newtheorem{cor}[lem]{Corollary}
\newtheorem{prop}[lem]{Proposition}
\newtheorem{thm}[lem]{Theorem}
\newtheorem{defn}[lem]{Definition}
\begin{document}

\title[ ]{ Non-Uniqueness of Weak Solutions to 2D Hypoviscous Navier-Stokes Equations }

\author{Tianwen Luo}
\address{Yau Mathematical Sciences Center, Tsinghua University, China.}
\email[T. Luo]{twluo@mail.tsinghua.edu.cn}
\thanks{The work of T.L. is supported in part by NSFC Grants 11601258.}

\author{Peng Qu}
\address{
School of Mathematical Sciences \& Shanghai Key Laboratory for Contemporary Applied Mathematics, Fudan University, China}
\email[P. Qu]{pqu@fudan.edu.cn}
\thanks{The work of P.Q. is supported in part by  NSFC Grants 11831011 and 11501121.}

\keywords{nonuniqueness; weak solution; two-dimensional hypoviscous Navier--Strokes equations; convex integration.}



\begin{abstract}
Through an adaption of {the} convex integration scheme in {the} two dimensional case, the non-uniqueness of $C^0_t L^2_x$ weak solutions is presented for {the two-dimensional} hypoviscous incompressible Navier-Stokes equations.
\end{abstract} 

\maketitle


\section{Introduction}\label{sec:1}

In this paper, we consider the 2D incompressible Navier--Stokes equations with fractional viscosity
\begin{equation} \label{1.1_NS}
\left\{ \begin{aligned}
& \pt v + \divg (v \otimes v ) + \nabla p + \nu (-\Delta)^{\theta} v = 0, \\
& \divg v = 0,
\end{aligned} \right.
\end{equation} 
where $\theta \in [0,1)$ is a given constant, the velocity field $v=v(t,x)$ is defined on  $(t,x) \in [0,+\infty) \times \mbt^2$ with zero spatial means
\begin{equation}
\int_{\mbt^2}^{} v(t,x) \rmd x = 0, 
\end{equation}
and we denote $\mbt^2 = \mbr^2 /( 2\pi \mbz^2)$. Here, for $u \in C^{\infty}(\mathbb{T}^3)$ the fractional Laplacian is defined via the Fourier transform as
\begin{align*}
\mathcal{F}((-  \Delta)^{\theta} u)(\xi) = |\xi|^{2\theta}\mathcal{F}(u)(\xi), \quad \xi \in \mathbb{Z}^2.
\end{align*}

When $\theta = 1$, System \eqref{1.1_NS} is the 2D Navier-Stokes equations, for which the existence and uniqueness of weak solutions to the Cauchy problem are well-established (see, for example, \cite{Temam-NSbook}). These weak solutions also satisfy the energy equality. In contrast, recently Buckmaster and Vicol showed the nonuniqueness of weak solutions to 3D Navier--Stokes equations in \cite{Buckmuster_Vicol}. The 3D Navier Stokes equations with fractional viscosity was  first considered by J.-L. Lions in \cite{Lions59} and the existence and uniqueness of weak solutions to the Cauchy problem for $\theta \in [5/4,\infty)$ was showed  in \cite{Lions69}. Moreover, an analogue of the Caffarelli-Kohn-Nirenberg \cite{CKN} result was established in \cite{KatzPavlovic}, showing that the Hausdorff dimension of the singular set, in space and time, is bounded by $5 - 4\theta$ for $\theta \in (1,5/4)$. The existence, uniqueness, regularity and stability of solutions to the 3D Navier--Stokes with fractional viscosity have been studied in \cite{OlsonTiti05,JiuWang14,Wu03,Tao09,Colombo_DeLellis_Massaccesi,Tang_Yu} and references therein.
On the other hand, for $\theta \in [1,5/4)$,  the non-uniqueness of weak solutions to the 3D Navier Stokes equations with fractional viscosity was showed in \cite{Luo_Titi}, extending the results in \cite{Buckmuster_Vicol}; while for $\theta \in (0,1/5)$, the non-uniqueness of Leray weak solutions was showed in \cite{CdLdR18}. 

The framework of convex integration, applicable to fluid dynamics, was introduced by De Lellis and Sz{\'e}kelyhidi in \cite{dLSz1,DeLellis_Szekelyhidi_InvMath} for the Euler equations. Since then, it was developed in the series of work in \cite{Isett12,Buckmaster2013transporting,Buckmaster2014,Isett16,BDSV17}, over the resolution of the flexible part of Onsager's conjecture for the 3D Euler equations; see also \cite{CET94} for the rigidity part. Recently, the method was extended to Navier--Stokes equations in \cite{Buckmuster_Vicol}, by developing a framework of convex integration with intermittence. The ideas in \cite{Buckmuster_Vicol} are further developed to treat transport equations, Boussinesq, and stationary Naiver-Stokes equations in \cite{Modena_Szekelyhidi,Buckmaster_Colombo_Vicol,LuoX,Cheskidov_Luo,LTZ}.

The purpose of this note is to show that, for the 2D hypoviscous Navier-Stokes equations with $\theta \in [0,1)$, the $C^0_t L^2_x$ weak solutions are not unique. As in  \cite{Luo_Titi}, we would like to show a result of $h$-principle type to this system.

\begin{thm} \label{thm:1}
	For any given $\theta \in [0,1)$ and $T \in \mbr_+$,
	if one has a smooth divergence-free vector field $u = u(t,x)$ with zero spatial mean on $[0,T]\times\mbt^2$,
	then for any given $\varepsilon_*>0$, 
	there exists a weak solution $v=v(t,x) \in C^0_t L_x^2$ to equations \eqref{1.1_NS},
	with zero spatial mean, satisfying
	\begin{gather}
	\linfone{v-u} \leq \varepsilon_*, \label{1.3} \\
	\supp_t v \subseteq N_{\varepsilon_*} (\supp_t u). \label{1.4}
	\end{gather}
\end{thm}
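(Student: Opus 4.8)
The plan is to prove Theorem~\ref{thm:1} by the now-standard convex integration iteration, following the scheme of \cite{Buckmuster_Vicol} and \cite{Luo_Titi} but reworked so that the building blocks live on $\mathbb{T}^2$. First I would set up the \emph{Navier--Stokes--Reynolds system}: one seeks $(v_q,\oR_q)$ with $\pt v_q+\divg(v_q\otimes v_q)+\nabla p_q+\nu(-\Delta)^\theta v_q=\divg\oR_q$, $\divg v_q=0$, together with inductive estimates $\linftwo{v_q}\le$ a geometric constant, $\Cone{v_q}\ls\lambda_q^{4}$ (some fixed power), and crucially $\linfone{\oR_q}\ls\delta_{q+1}$, where $\lambda_q=a^{(b^q)}$, $\delta_q=\lambda_q^{-2\beta}$, and $b$ is taken large, $\beta>0$ small (depending on $\theta$). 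The base case is $v_0$ a mollification/rescaling of the given field $u$ with a suitable time cutoff so that $\supp_t v_0\subseteq N_{\varepsilon_*/2}(\supp_t u)$; since $\theta<1$ the viscous term $\nu(-\Delta)^\theta v_0$ is harmless at the level of the $L^1_x$ norm of the resulting stress.

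Next comes the \emph{perturbation step}: given $(v_q,\oR_q)$, one constructs $v_{q+1}=v_q+w_{q+1}$ with $w_{q+1}=\wqp_{q+1}+\wqc_{q+1}+\wqt_{q+1}$ — a principal intermittent part, an incompressibility corrector, and a temporal corrector. In 2D the principal part is built from a finite family of \emph{intermittent shear/Mikado-type flows} $W_{\dir}$ indexed by $\dir\in\Lambda\subset\mathbb{S}^1\cap\mathbb{Q}^2$: these are $\frac{2\pi}{\lambda_{q+1}}$-periodic, concentrated (via a parameter $r_\perp\ll1$) on $1/\lambda_{q+1}$-spaced tubes, normalized so that $\aint_{\mathbb{T}^2}W_\dir\otimes W_\dir\rmd x=\dir\otimes\dir$, oscillating at frequency $\lambda_{q+1}\mu$ with a large ``shear'' parameter $\mu$ so that the building block is genuinely 2D-intermittent. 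Using the geometric lemma of \cite{DeLellis_Szekelyhidi_InvMath} one writes $\rho\,\id-\oR_\ell=\sum_\dir a_\dir^2\,\dir\otimes\dir$ with $a_\dir$ smooth in $\oR_\ell$ (here $\oR_\ell$ is a mollification of $\oR_q$ in space and time, and $\rho\sim\delta_{q+1}$ is a suitably chosen scalar absorbing the low-frequency error and making room for the energy), and sets $\wqp_{q+1}=\sum_\dir a_\dir(t,x)\,W_\dir(\lambda_{q+1}x)$ — times a temporal oscillation $g_\dir(\lambda_{q+1}^{?}t)$ as in \cite{Buckmuster_Vicol} if one wants the temporal building blocks, but in the hypoviscous 2D setting one can likely dispense with them and let $\wqt_{q+1}$ handle $\pt$. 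The correctors $\wqc_{q+1},\wqt_{q+1}$ are chosen exactly as in the 3D scheme to keep $\divg v_{q+1}=0$ and to cancel the ``high-high to low'' temporal error $\pt(\sum a_\dir^2\,\mathbb{P}_{\ne0}(W_\dir\otimes W_\dir))$.

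The new Reynolds stress is then $\oR_{q+1}=\oR_{\mathrm{linear}}+\oR_{\mathrm{corrector}}+\oR_{\mathrm{oscillation}}+\oR_{\mathrm{commutator}}$, where $\oR_{\mathrm{linear}}$ collects $\px$- and $\nu(-\Delta)^\theta$-applied-to-$w_{q+1}$ terms, $\oR_{\mathrm{oscillation}}=\opR\big(\sum_\dir\nabla(a_\dir^2)\cdot\mathbb{P}_{\ne0}(W_\dir\otimes W_\dir)\big)+(\text{low-frequency part cancelled by }\rho\,\id)$, $\oR_{\mathrm{corrector}}$ the quadratic cross-terms involving $\wqc,\wqt$, and $\oR_{\mathrm{commutator}}=\oR_q-\oR_\ell+(v_\ell\ootimes v_\ell-v_q\ootimes v_q)$. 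One estimates each in $L^1_x$ using the anti-divergence operator $\opR$ bounded on $L^p$, the improved Hölder / $L^p$-decorrelation inequality for intermittent flows (so that $\|a_\dir W_\dir(\lambda_{q+1}\cdot)\|_{L^1}\ls\|a_\dir\|_{L^1}$ up to lower-order corrections), and the gain of $\lambda_{q+1}^{-1}$ from each application of $\opR$ against the high oscillation. The point is that the hypoviscous term contributes $\nu\lambda_{q+1}^{2\theta}\|w_{q+1}\|$-type factors, and since $2\theta<2$ one still wins provided $b,\beta$ are tuned (exactly the parameter inequalities that force $\theta<1$); this is the step I expect to be \textbf{the main obstacle} — balancing the intermittency dimension $r_\perp$, the oscillation $\mu$, and the viscous loss $\lambda_{q+1}^{2\theta}$ so that \emph{all} of $\linfone{\oR_{q+1}}\ls\delta_{q+2}$, $\Cone{v_{q+1}}\ls\lambda_{q+1}^{4}$, and $\linftwo{w_{q+1}}\ls\delta_{q+1}^{1/2}$ hold simultaneously, since in 2D there is ``less room'' for intermittency than in 3D. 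Finally, iterating, $\sum_q\linftwo{w_{q+1}}<\infty$ with $v_q\to v$ in $C^0_tL^2_x$; by choosing $a$ (hence $\lambda_0$) large one gets $\linfone{v-u}\le\sum_q\linfone{w_{q+1}}+\linfone{v_0-u}\le\varepsilon_*$; the temporal support of every $w_{q+1}$ is controlled by that of $\oR_q$ (via the cutoffs built into $a_\dir$ and $\rho$), which inductively stays in $N_{\varepsilon_*}(\supp_t u)$, giving \eqref{1.3}--\eqref{1.4}; and $\oR_q\to0$ in $L^1_x$ shows $v$ solves \eqref{1.1_NS} weakly.
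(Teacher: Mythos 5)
Your high-level iteration scheme (Navier--Stokes--Reynolds system, mollification, principal plus incompressibility plus temporal corrector, anti-divergence estimates, parameter balancing against the viscous loss $\lambda_{q+1}^{2\theta}$) is essentially the one the paper runs. The genuine gap is in the choice of building block, and it is not the parameter-balancing difficulty you flag but a geometric one: you propose intermittent Mikado/jet-type flows supported on $1/\lambda_{q+1}$-spaced tubes, but in $\mathbb{T}^2$ ``tubes'' are strips, and strips in two different directions always intersect, so the disjoint-support structure that makes Mikado cross-terms $W_\dir\otimes W_{\dir'}$ vanish is simply unavailable. The paper explicitly rules this out (``the method of intermittent jets \ldots or viscous eddies \ldots can not be applied, due to the 3D nature of its Mikado flow structure''). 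What the paper uses instead is the 2D analogue of Buckmaster--Vicol's \emph{intermittent Beltrami flows}: the 2D stationary plane waves $b_\dir(x)=i\dir^\perp e^{i\lambda\dir\cdot x}$ of Choffrut--De Lellis--Sz\'ekelyhidi, multiplied by a rescaled 2D Dirichlet kernel $\eta_\dir$ with a temporal phase shift $\mu t$; the building block $\mathbb W_\dir=\eta_\dir b_\dir$ is \emph{not} divergence-free and \emph{not} supported on tubes, and the cross-term cancellation is managed via frequency separation (projectors $\mbp_{\geq(\lambda\sigma)/2}$, etc.), the anti-divergence gain $\lambda_{q+1}^{-1}$, and a separate algebraic identity $\divg(b_\dir\ootimes b_{\dir'}+b_{\dir'}\ootimes b_\dir)=\nabla(\lambda^2\psi_\dir\psi_{\dir'})$ that pushes those terms into the pressure.

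A smaller but real divergence: you invoke the standard geometric lemma with an energy-pumping scalar, $\rho\,\id-\oR_\ell=\sum_\dir a_\dir^2\,\dir\otimes\dir$, which requires keeping $\rho\,\id-\oR_\ell$ in a compact set of positive-definite matrices. The paper instead proves a 2D geometric lemma (Lemma~\ref{Lem:3.2}) that decomposes \emph{any} trace-free $2\times2$ matrix directly as $\oR=\sum_\dir\gamma_\dir(\oR)^2\,(\dir\ootimes\dir)$ with smooth $\gamma_\dir$ defined on all of $\dM$ and the bound $\gamma_\dir(\oR)\ls(1+|\oR|)^{1/2}$; consequently no $\rho$ is needed, and there is no positivity constraint to track. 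Also minor: the paper takes $v_0=u$ exactly (no initial mollification/rescaling of $u$), defining $\oR_0=\opR(\pt v_0+\nu(-\Delta)^\theta v_0)+v_0\ootimes v_0$; the $\varepsilon_*$-smallness in \eqref{1.3}--\eqref{1.4} is arranged by choosing the base amplitude large enough, and the $C^0_tL^2_x$ regularity of the limit is extracted by interpolating the $L^2$ convergence against the $C^1$ growth to get a Cauchy sequence in $C^0_tH^{\beta'}_x$. You would need to replace the Mikado ansatz by the Dirichlet-kernel-modulated plane waves (or something equally 2D-viable) before the rest of your outline could be carried through.
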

\noindent{}Here for weak solutions, we mean solutions in the sense of distribution, and see \eqref{2.9} for $N_\varepsilon(\cdot)$.
Moreover, by choosing $u$ with a compact temporal support,
and $\varepsilon_* > 0$ small enough,
we have

\begin{cor}\label{cor:1}
	System \eqref{1.1_NS} admits nontrivial $C^0_t L_x^2$ weak solutions with compact temporal supports.
	Thus, generally, $C^0_t L_x^2$ weak solutions to the Cauchy problem of \eqref{1.1_NS} are not unique.
\end{cor}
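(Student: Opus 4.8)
The corollary is an immediate consequence of Theorem~\ref{thm:1}: take any nonzero smooth divergence-free zero-mean field $u$ whose temporal support is compactly contained in $(0,T)$, and apply the theorem with $\varepsilon_*>0$ so small that $\varepsilon_*<\linfone{u}$ and $N_{\varepsilon_*}(\supp_t u)\subset(0,T)$. Then the resulting $v\in C^0_tL_x^2$ satisfies $\linfone{v}\ge\linfone{u}-\varepsilon_*>0$, so it is nontrivial, and by \eqref{1.4} its temporal support is compact and contained in $(0,T)$; extending $v$ by zero gives a nonzero weak solution of \eqref{1.1_NS} with vanishing initial data, while $v\equiv 0$ is another, whence non-uniqueness for the Cauchy problem. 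So the content is entirely in Theorem~\ref{thm:1}, which I would prove by a convex integration scheme with intermittency, in the spirit of \cite{Buckmuster_Vicol,Luo_Titi}, carried out in two dimensions. First I would recast \eqref{1.1_NS} as the Navier--Stokes--Reynolds system
\begin{equation*}
\pt v+\divg(v\otimes v)+\nabla p+\nu(-\Delta)^\theta v=\divg\oR,\qquad\divg v=0,
\end{equation*}
with $\oR$ symmetric and trace-free, and construct a sequence $(v_q,\oR_q)_{q\ge0}$ of smooth solutions of it on $[0,T]\times\mbt^2$, governed by a super-exponential frequency $\lamq=a^{(b^{q})}$, an amplitude $\delta_q$ (with a prefactor depending on $u$) chosen so that $\|\oR_0\|_{L^\infty_tL^1_x}\le\delta_1$ while $\sum_q\delta_q^{1/2}<\infty$, and a concentration parameter $\mu$ for the building blocks; here $a,b$ are large and the exponent $\beta$ small, all fixed at the end. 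Taking $v_0=u$ and $\oR_0$ the induced stress, I would maintain inductively that $\oR_q$ lies below $\delta_{q+1}$ in $L^\infty_tL^1_x$, that $v_q$ obeys a $C^1$ bound at frequency $\lamq$, and that $\supp_t(v_q,\oR_q)$ remains in a nested $\varepsilon_*$-neighborhood of $\supp_t u$.

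The core is the iteration step producing $(v_{q+1},\oR_{q+1})$ with $\|v_{q+1}-v_q\|_{C^0_tL^2_x}\lesssim\delta_{q+1}^{1/2}$, with $\oR_{q+1}$ below $\delta_{q+2}$ in $L^\infty_tL^1_x$, with $v_{q+1}$ frequency-localized to $\lamqp$, and with temporal support enlarged only slightly. After mollifying $(v_q,\oR_q)$ in space and time at a scale $\ell$ — which costs a commutator stress $\Rm$ — I add $w_{q+1}=\wqp+\wqc+\wqt$. The principal part is $\wqp=\sumk a_\dir(t,x)\,\phi_\dir(\lamqp x)\,\dir^\perp$, a superposition over a finite family of directions $\dir\in\Lambda\subset\mbs^1$, where each $\phi_\dir$ is a $\mbt^2$-periodic mean-zero profile depending only on $\dir\cdot x$ — so $a_\dir\phi_\dir(\lamqp x)\dir^\perp$ is essentially divergence-free — concentrated on a slab of width $\mu^{-1}$ and $L^2$-normalized so that it is intermittent with small $L^1$ norm. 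A temporal cutoff built into $a_\dir$ keeps $w_{q+1}$ supported near $\supp_t\oR_q$, and the $a_\dir$ are selected from the mollified stress $\oR_\ell$ by the standard geometric lemma so that $\sumk a_\dir^2\big(\aint\phi_\dir^2\big)\,\dir^\perp\otimes\dir^\perp$ cancels $\oR_\ell$ up to its trace. The corrector $\wqc$ restores exact incompressibility, and the temporal corrector $\wqt$ absorbs the low-frequency, time-oscillating part of $\wqp\otimes\wqp$.

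The new stress is then $\oR_{q+1}=\Rl+\Rc+\Ro+\Rm$, where $\Rl$ collects the terms produced by $\pt$, transport by the mollified velocity, and $\nu(-\Delta)^\theta$ acting on $w_{q+1}$; $\Rc$ the corrector and temporal-corrector interactions; and $\Ro$ the high-frequency remainder of $\wqp\otimes\wqp+\oR_\ell$; each term is inverted by the order-$(-1)$ antidivergence $\opR$. Estimating these in $L^1_x$ exploits the intermittent $L^p$ gains of the $\phi_\dir$; the binding constraint is that the dissipative contribution to $\Rl$, of size $\sim\nu\lamqp^{2\theta-1}\|w_{q+1}\|_{L^\infty_tL^1_x}$ after the antidivergence, stay below $\delta_{q+2}$. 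Since $\theta<1$ there is a genuine power of $\lamqp$ to spare, so that — unlike in the three-dimensional case — the essentially one-directional concentration available to a 2D shear flow still suffices; it is exactly the simultaneous requirement that $\Ro$, the various linear errors, and this dissipative error all fall below $\delta_{q+2}$ that pins down $\mu$ together with $a,b,\beta$. I expect verifying this balance — that one-directionally concentrated profiles give enough intermittency to beat both the oscillation error and the fractional dissipation in two dimensions — to be the main obstacle.

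Finally I would iterate and pass to the limit. The bounds $\|v_{q+1}-v_q\|_{C^0_tL^2_x}\lesssim\delta_{q+1}^{1/2}$ are summable, so $v_q\to v$ in $C^0_tL^2_x$, while $\oR_q\to0$ in $L^\infty_tL^1_x$; hence $v$ solves \eqref{1.1_NS} in the sense of distributions. Each $w_{q+1}$ is only $O(\delta_{q+1}^{1/2})$ in $L^2_x$, but by its concentration its $L^1_x$ norm is smaller by a factor $\sim\mu^{-1/2}$, so choosing $\mu$ large enough makes $\sum_{q\ge0}\|w_{q+1}\|_{L^\infty_tL^1_x}$ as small as desired; together with $v_0=u$ this gives $\linfone{v-u}\le\varepsilon_*$, i.e.\ \eqref{1.3}, and the nested temporal supports give $\supp_t v\subseteq N_{\varepsilon_*}(\supp_t u)$, i.e.\ \eqref{1.4}. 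This establishes Theorem~\ref{thm:1}, and with it Corollary~\ref{cor:1} as explained above.
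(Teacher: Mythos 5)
Your reduction of Corollary~\ref{cor:1} to Theorem~\ref{thm:1} is exactly the paper's argument and is correct. The gap lies in your sketch of Theorem~\ref{thm:1} itself, precisely where you flagged your own concern. You build $\wqp$ from profiles $\phi_\dir(\lamqp x)\dir^\perp$ with $\phi_\dir$ depending only on $\dir\cdot x$ and concentrated on a thin slab --- a one-directionally concentrated shear flow, the 2D analogue of an intermittent jet. The paper explicitly says that intermittent jets \cite{Buckmaster_Colombo_Vicol} and viscous eddies \cite{Cheskidov_Luo} ``can not be applied'' in 2D and instead takes $\mbw_\dir=\eta_\dir b_\dir$ with the concentration carried by the genuinely two-dimensional kernel $\eta_\dir(t,x)=D_r\big(\lamqp\sigma(\dir\cdot x+\mu t),\,\lamqp\sigma\,\dir^\perp\cdot x\big)$, which concentrates in \emph{both} the $\dir$ and $\dir^\perp$ directions and gives $\linfp{\eta_\dir}\ls r^{1-2/p}$. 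A slab of width $\mu^{-1}$ gives only $\sim\mu^{1/2-1/p}$, half the exponent; since the concentration parameter must stay strictly below the oscillation frequency ($\mu\ll\lamqp$), the dissipative error after the anti-divergence, of size roughly $\lamqp^{2\theta-1}\linfp{w_{q+1}}$ for $p$ close to $1$, is then controlled only for $\theta$ up to about $3/4$, not for the full range $\theta<1$. So the building block you propose would not close the iteration across the stated range of $\theta$.

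There is also a smaller internal inconsistency in the sketch: your $\phi_\dir(\lamqp x)$ carries no time-dependence, so there is no ``low-frequency, time-oscillating part of $\wqp\otimes\wqp$'' for $\wqt$ to absorb. In the paper the temporal shift $\mu t$ inside $\eta_\dir$, together with the transport identity $\frac1\mu\pt\eta_\dir=\pm(\dir\cdot\nabla)\eta_\dir$ (equation~\eqref{3.18_eta_est}), is what lets $\pt\wqt$ cancel the otherwise uncontrollable low-frequency part of $\divg(\wqp\ootimes\wqp)$ on the diagonal $\dir+\dir'=0$; that temporal shift is an essential ingredient, not a decoration.
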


We now make some comments on the analysis in this paper. We shall adapt the 2D stationary flow introduced in \cite{Choffrut_DeLellis_Szekelyhidi} to an intermittent form, inspired by the the intermittent Beltrami flow introduced in \cite{Buckmuster_Vicol} as the basic building block in the intermittent convex integration scheme for 3D Navier--Stokes equations.
Meanwhile, in the two-dimensional case, 
it seems that the method of intermittent jets  introduced in \cite{Buckmaster_Colombo_Vicol} or viscous eddies introduced in \cite{Cheskidov_Luo} can not be applied, 
due to the 3D nature of its Mikado flow structure. Furthermore, we shall use different scaling for the parameters due to the $L^p$ estimates for the 2D Dirichlet kernels.
At last, we would like to compare the result of this note with the one of \cite{LTZ}. 
In \cite{LTZ}, the authors present the 2D intermittent convex integration scheme to show the finite energy weak solutions 
for 2D Boussinesq equations with diffusive temperature.
By taking constant temperature in the solution, \cite{LTZ} can also provide the non-uniqueness result to \eqref{1.1_NS}. 
The new points got in this note may be given as follows.
First, Theorem \ref{thm:1} provides a result of the h-principle type.
Secondly, with Theorem \ref{thm:1}, one can construct solutions with compact temporal supports.

\section{Iteration Lemma} \label{sec:2}

In order to prove the above result in the framework of convex integration, 
one needs an iteration process on the corresponding Navier--Stokes--Reynolds  system
\begin{equation} \label{2.1_NSR}
	\left\{\begin{aligned}
		& \pt v + \divg (v \otimes v) + \nabla p + \nu (-\Delta)^\theta v = \divg \oR, \\
		& \divg v = 0,
	\end{aligned}\right.
\end{equation}
where the Reynolds tensor $\oR$ is a symmetric trace-free $2 \times 2$ matrix.
{Also} we would apply the scheme of intermittent convex integration to add waves with high frequency and strong concentration to cancel the Reynolds tensor $\oR$ gradually.

In order to illustrate our analysis in a clearer manner,
we would use several parameters to denote the different scales in the convex integration process.
First, for $\theta \in [0,1)$ given in the system \eqref{1.1_NS},
we denote
\begin{equation} \label{3.58+3}
\theta_* = \left\{\begin{alignedat}{2}
& 2\theta-1, \quad & & \frac{1}{2} < \theta < 1, \\
& 0, \quad & & 0 \leq \theta \leq \frac{1}{2},
\end{alignedat}\right.
\end{equation}
for which, we can easily check that $\theta_* \in [0,1)$.
Then we shall choose the index parameter $\alpha \in \mbq_+$ accordingly satisfying
\begin{equation} \label{7.16+_alpha}
	\alpha \leq \frac{1-\theta_*}{8} \in \big( 0, \min\{ \frac{1-\theta}{4}, \frac{1}{8} \} \big].
\end{equation}
Now for each $q \in \mbn$, we set
\begin{equation} \label{2.2_lambda_q}
	\lambda_q = A^{(B^q)}
\end{equation}
to denote the principle frequency for the perturbation waves in the convex integration scheme,
and set
\begin{equation}\label{2.4_varepsilon}
	\varepsilon_q = \lambda_q^{-2\beta}
\end{equation}
to denote the amplitude.
Here $B \in \mbn$ would be chosen large enough based on $\alpha$ to satisfy
\begin{equation}\label{++.1}
	B > \frac{320}{\alpha},
\end{equation}
and $\beta \in \mbr_+$ would be chosen small enough accordingly to satisfy
\begin{equation}\label{++.2}
	0 < \beta < \frac{1}{100 B^2}. 
\end{equation}
The parameter $A \in 5 \mbn$ would be chosen at last to be large enough 
to absorb the absolute constants in the inequalities and to satisfy
\begin{equation}\label{++.3}
	A^\alpha \in 5 \mbn.
\end{equation}
We note that under these choices, we have 
\begin{equation}\label{++.4}
	\lambda_q \in 5 \mbn, \quad \lambda^\alpha_q \in 5 \mbn, \quad \forall\, q \in \mbn.
\end{equation}
and
\begin{equation}\label{++.5}
	\varepsilon_{q+1}^{-1} \ll \varepsilon_{q+2}^{-1} = \lambda_q^{2 \beta B^2} \leq \lambda_q^{\frac{1}{50}}.
\end{equation}

In the main parts of this note, we would try to prove this iteration lemma

\begin{lem} \label{Lem:2.1}
	For any given $\theta \in [0,1)$ and $T \in \mbr_+$, 
	if $(v_q, p_q, \oR_q)$ is a smooth solution to \eqref{2.1_NSR} on $[0,T] \times \mbt^2$ with
	\begin{gather} 
		\Cone{v_q} \leq \lamq^4, \label{2.2_a_vqC1_asmp} \\
		\Linfone{\oR_q} \leq A \varepsilon_{q+1}, \label{2.2_b_Rq_l1_asmp}\\
		\Cone{\oR_q} \leq \lamq^{10} \label{2.2_c_RC1_asmp}
	\end{gather}
	and $\aint_{\mbt^2} v_q \rmd x = 0$,
	then there exists a smooth solution $(v_{q+1}, p_{q+1}, \oR_{q+1})$ to \eqref{2.1_NSR} with
	\begin{gather}
		\Cone{v_{q+1}} \leq \lamqp^4, \label{2.3_a_vqC1_est} \\
		\Linfone{\oR_{q+1}} \leq A \varepsilon_{q+2}, \label{2.3_b_Rq_l1_est}\\
		\Cone{\oR_{q+1}} \leq \lamqp^{10} \label{2.3_c_RC1_est}
	\end{gather}
	and 
	\begin{gather}
		\supp_t v_{q+1} \cup \supp_t \oR_{q+1} \subset N_{\varepsilon_{q+1}} (\supp_t v_q \cup \supp_t \oR_q), \label{2.4_suppv} \\
		\linftwo{v_{q+1} - v_q} \leq A \varepsilon_{q+1}^{\frac{1}{2}}, \label{2.5_L2Increase} \\
		\linfone{v_{q+1} - v_q} \leq \varepsilon_{q+1}^{\frac{1}{2}}, \label{2.6_WIncrease} \\
		\aint_{\mbt^2} v_{q+1} \rmd x =0, \label{2.6+}
	\end{gather}
	where for $S \subseteq [0,T]$ we denote
	\begin{equation}\label{2.9}
		N_\varepsilon (S) := \big\{ t \in [0,T] \mid \exists s \in S, \ \text{s.t.} \, |s-t| \leq \varepsilon \big\}.
	\end{equation}
\end{lem}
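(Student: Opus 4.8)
The plan is to implement one step of the intermittent convex integration scheme adapted to the two‑dimensional hypoviscous setting, following the structure of \cite{Buckmuster_Vicol} and \cite{Luo_Titi} but with the 2D stationary building blocks of \cite{Choffrut_DeLellis_Szekelyhidi}. First I would introduce an intermediate length scale $\ell$ (a small power of $\lamqp^{-1}$, chosen between $\lamq^{-1}$ and $\lamqp^{-1}$) and mollify $(v_q,\oR_q)$ in space and time to obtain $(v_\ell,\oR_\ell)$ that is smooth at scale $\ell$; using \eqref{2.2_a_vqC1_asmp}--\eqref{2.2_c_RC1_asmp} this mollification changes the solution only by an admissible error and preserves the temporal support up to a slight enlargement, which will feed into \eqref{2.4_suppv}. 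I would also insert a temporal cutoff so that the new perturbation is supported where $\oR_\ell$ is non‑negligible, again respecting \eqref{2.4_suppv}.

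Next I would define the perturbation $w_{q+1}=\wqp+\wqc+\wqt$, where the principal part $\wqp$ is a superposition over a finite family $\Lambda$ of 2D directions $\dir$ of intermittent stationary flows — products of a Dirichlet‑kernel–type concentrated profile oscillating at frequency $\lamqp$ with the slowly varying amplitude coefficients $a_\dir(t,x)$ built from $\oR_\ell$ via the standard geometric (Nash‑type) lemma for symmetric $2\times2$ trace‑free tensors, normalized so that $\sum_\dir a_\dir^2 \fint (\text{profile})^2 \, \dir\otimes\dir$ cancels the low‑frequency part of $-\oR_\ell$ plus a pressure term. The corrector $\wqc$ restores exact divergence‑freeness, and the temporal corrector $\wqt$ absorbs the worst part of $\pt(\text{profile})$‑type errors, exactly as in the 3D scheme; the zero‑mean condition \eqref{2.6+} is arranged by subtracting the (small) spatial average. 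The new Reynolds stress $\oR_{q+1}$ is then obtained by applying the inverse‑divergence operator $\opR$ to the linear error $\nu(-\Delta)^\theta w_{q+1}+\pt(\wqc+\wqt)+\divg(v_\ell\otimes w_{q+1}+w_{q+1}\otimes v_\ell)$, the oscillation error $\divg(w_{q+1}\otimes w_{q+1}+\oR_\ell) - \pt\wqt$, and the mollification commutator error.

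The estimates then proceed term by term: \eqref{2.3_b_Rq_l1_est} follows from the $L^1$ gain coming from the concentration of the 2D Dirichlet kernels (this is where the different parameter scaling forced by the 2D $L^p$ estimates enters, cf. the remark after Corollary \ref{cor:1}), together with the smallness $\varepsilon_{q+1}\ll\varepsilon_{q+2}$ from \eqref{++.5}; the $L^2$ bound \eqref{2.5_L2Increase} and the $L^1$ bound \eqref{2.6_WIncrease} come from the $L^2$ normalization of the principal part and the fact that correctors are lower order; and the $C^1$ bounds \eqref{2.3_a_vqC1_est}, \eqref{2.3_c_RC1_est} follow crudely from the polynomial‑in‑$\lamqp$ size of all ingredients after choosing $A$ large. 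The hypoviscous term $\nu(-\Delta)^\theta w_{q+1}$ is the one genuinely new difficulty relative to the Euler case: its contribution to $\oR_{q+1}$ carries a factor $\lamqp^{2\theta}$ (minus the gain from $\opR$), and closing the iteration requires that $2\theta-1<1$ be absorbed by the chosen $\alpha,\beta,B$ — precisely the role of the definitions \eqref{3.58+3}, \eqref{7.16+_alpha}. I expect the main obstacle to be balancing these competing powers of $\lamqp$ in the oscillation and linear (viscous) errors simultaneously with the $L^1\to L^2$ cost of intermittency, i.e. verifying that the admissible window \eqref{7.16+_alpha} for $\alpha$ together with \eqref{++.1}--\eqref{++.5} really does make every error term smaller than $A\varepsilon_{q+2}$ in $L^1_x$ while keeping the $C^1$ bounds polynomial; the temporal support propagation \eqref{2.4_suppv} and zero‑mean normalization \eqref{2.6+} are then routine bookkeeping built into the construction.
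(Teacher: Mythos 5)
Your proposal follows essentially the same route as the paper: mollify at scale $\ell$, introduce a temporal cutoff, build the perturbation from intermittent 2D stationary flows (Dirichlet-kernel concentration times the $b_\dir$ shear) with amplitudes from a geometric Nash-type lemma, add a divergence-freeing corrector $\wqc$ and a temporal corrector $\wqt$, and estimate the new Reynolds stress by splitting into linear, corrector, oscillation and mollification-commutator parts, with the $\theta_*$ device absorbing the $(-\Delta)^\theta$ cost. One small imprecision worth fixing: the linear error should carry $\pt(\wqp+\wqc)$, not $\pt(\wqc+\wqt)$, since it is the identity $\wqp+\wqc = \nabla^\perp\bigl(\sumk a_\dir\eta_\dir\psi_\dir\bigr)$ that lets $\opR\pt(\cdot)$ gain a factor $\lamqp^{-1}$ from $\psi_\dir$, while $\pt\wqt$ must be bookkept with the oscillation error where it cancels the $\mu^{-1}\pt\eta_\dir^2$ contribution of $\divg(\wqp\ootimes\wqp)$.
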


With this iteration lemma we can prove
Theorem \ref{thm:1} as follows

\begin{proof}[Proof of the main theorem.]
	Take $v_0 = u$ and we shall define $p_0, \oR_0$ for the Navier--Stokes--Reynolds system \eqref{2.1_NSR} as
	\[
		\oR_0 = \opR \big( \pt v_0 + \nu (-\Delta)^\theta v_0 \big) + v_0 \ootimes v_0
	\] 
	and
	\[
		p_0 = - \frac{1}{2} |v_0|^2,
	\]
	where $\opR$ would be defined in details in \eqref{3.47_opR} later,
	$\ootimes$ denotes the trace-free part of the tensor product as
	\[
		f \ootimes g = \begin{pmatrix}
			\frac{1}{2} f_1 g_1 - \frac{1}{2} f_2 g_2 & f_1 g_2 \\
			f_2 g_1 & \frac{1}{2} f_2 g_2 - \frac{1}{2} f_1 g_1
		\end{pmatrix}, \quad \forall\, f,g \in \mbr^2.
	\]
	Then for $A$ large enough 
	one can use Lemma \ref{Lem:2.1} to get the sequence $\{v_q\}$ with estimates \eqref{2.3_a_vqC1_est}--\eqref{2.6+}.
	Therefore, by \eqref{2.5_L2Increase}, one has
	\[
		\sum_{q=0}^{\infty} \linftwo{v_{q+1}- v_q}  < +\infty,
	\]
	which shows the strong convergence of $\{v_q\}$ in $L_t^\infty L_x^2$ to some $v(t,x)$.
	And by \eqref{2.3_b_Rq_l1_est}, this $v(t,x)$ is a weak solution to \eqref{1.1_NS}.
	Meanwhile, by \eqref{2.6_WIncrease} and \eqref{2.4_suppv}, 
	one can get \eqref{1.3}--\eqref{1.4}.
	
	Moreover, using \eqref{2.2_a_vqC1_asmp} and \eqref{2.5_L2Increase}, we have that for each $q_* \in \mbn$,
	\begin{align*}
		\sum_{q=q_*}^{\infty} \| v_{q+1} - v_q \|_{C^0_t H^{\beta'}_x} \ls & \sum_{q=q_*}^{\infty} \linftwo{v_{q+1} - v_q}^{1-\beta'}  \big(\cNN{v_{q+1}}{1}^{\beta'} + \cNN{v_{q}}{1}^{\beta'} \big) \\
		\ls & \sum_{q=q_*}^\infty A^{1-\beta'} \varepsilon_{q+1}^{\frac{1-\beta'}{2}} \lamqp^{4\beta'} \\
		\ls & \sum_{q=q_*}^\infty A^{1-\beta'} \lamqp^{4 \beta' - \beta(1-\beta')}.
	\end{align*}
	For $\beta' < \beta / (4+\beta)$, 
	this shows that $\{ v_q \}$ is a Cauchy sequence in $C_t^0 H_x^{\beta'}$ 
	and thus converges strongly and $v(t,x)$ is a $C_t^0 L_x^2$ function.
	Here we use $a \ls b$ to denote $a \leq Cb$ for some absolute constant $C$ 
	independent of the choice of our parameters $B ,\beta$ and $A$, and would be absorbed by $A$
	if needed.
\end{proof}

For the rest of the paper, we would try to prove Lemma \ref{Lem:2.1}.

\section{Mollification}

In order to deal with the possible loss of derivatives in the analysis,
we first mollify the approximate solutions.
Denote
\[
	\varphi_\ell(x) = \frac{1}{\ell^2} \varphi_1(\frac{x}{\ell}), \quad \tilde \varphi_\ell(t) = \frac{1}{\ell} \tilde\varphi_1(\frac{t}{\ell})
\]
as the standard 2D and 1D Friedrichs mollifier sequences respectively, with
\[
	\supp \varphi_1 \subseteq B_1(0), \quad \supp \tilde\varphi_1 \subseteq (-1,1).
\]
Then for
\begin{equation} \label{+.2_ell}
	\ell = \lambda_q^{-20},
\end{equation}
we can mollify $v_q$ and $R_q$ given in Lemma \ref{Lem:2.1} as
\begin{align}
	v_\ell = & (v_q *_x \varphi_\ell) *_t \tilde\varphi_\ell, \label{2.3_vl}\\
	\oR_\ell = & (\oR_q *_x \varphi_\ell) *_t \tilde\varphi_\ell. \label{2.4_Rl}
\end{align}
Since $(v_q,p_q,\oR_q)$ solves \eqref{2.1_NSR},
we know that $(v_\ell,p_\ell,\oR_\ell)$ solves
\begin{equation} \label{+.5_NSR_vl}
\left\{\begin{aligned}
& \pt v_\ell + \divg (v_\ell \otimes v_\ell) + \nabla p_\ell + \nu (-\Delta)^\theta v_\ell = \divg (\oR_\ell+\Rm), \\
& \divg v_\ell = 0,
\end{aligned}\right.
\end{equation}
where we can choose
\begin{align}
	p_\ell = & (p_q *_x \varphi_\ell) *_t \tilde\varphi_\ell +|v_\ell|^2 - \big(|v_q|^2*_x\varphi_\ell \big) *_t \tilde\varphi_\ell, \\
	\Rm = & (v_\ell \ootimes v_\ell) - \big( (v_q \ootimes v_q) *_x \varphi_\ell \big) *_t \tilde\varphi_\ell.
\end{align}
Using the inductive assumptions \eqref{2.2_a_vqC1_asmp}--\eqref{2.2_c_RC1_asmp}, we have
\begin{gather}
	\CN{v_\ell} \ls \lamq^4 \ell^{-N+1} \ls \ell^{-N}, \quad \forall\, \Nrang, \label{+.8_vl_CN} \\
	\CN{\oR_\ell} \ls \lamq^{10} \ell^{-N+1} \ls \ell^{-N}, \quad \forall\, \Nrang, \label{+.9_Rl_CN} \\
	\Linfone{\oR_\ell} \leq \Linfone{\oR_q} \leq A \varepsilon_{q+1}, \label{+.12_Rl_l1} \\
	\linftwo{v_\ell - v_q} + \linfone{v_\ell - v_q} \ls \| v_\ell - v_q \|_{L^\infty_t L^\infty_x }  \ls \ell \Cone{v_q} \ls \lamq^{-16}{.} \label{+.11} 
\end{gather}
Moreover,
\begin{align*}
	\Linfinf{\Rm} \ls & \ell \Cone{v_\ell\ootimes v_\ell} \ls \ell \lamq^8,\\
	\CN{\Rm} \ls &  \ell^{-N+1} \Cone{v_\ell\ootimes v_\ell} \ls \ell^{-N+1} \lamq^8.\\
\end{align*}
Thus, for 
\[
	\Rls \overset{\mathrm{def.}}{=} \oR_\ell + \Rm,
\]
we have
\begin{gather}
	\Linfone{\Rls} \leq A \varepsilon_{q+1} + \ell \lamq^8 \leq 2 A \varepsilon_{q+1}, \label{+.16_Rl_l1} \\
	\CN{\Rls} \ls \ell^{-N} + \ell^{-N+1} \lamq^8 \ls \ell^{-N}, \quad \forall\, \Nrang. \label{+.17_Rl_CN}
\end{gather}
Here we use the fact that by our choice of the parameters \eqref{++.5} and \eqref{+.2_ell}, it holds
\[
	\ell \lambda_q^8 \leq \varepsilon_{q+1}{.}
\]

\section{2D Intermittent Stationary Flow}

In this section, we shall choose the sequence of waves with high frequency and strong concentration to perturb the system and construct $v_{q+1}$.
As presented in \cite{Buckmuster_Vicol}, 
the intermittent Beltrami flow is the basic building block in the intermittent convex integration scheme to prove the nonuniqueness of weak solutions to 3D Navier--Stokes equations.
Meanwhile, in the two-dimensional case, 
it seems that the method of intermittent jets  introduced in {\cite{Buckmaster_Colombo_Vicol} or viscous eddies introduced in \cite{Cheskidov_Luo}} can not be applied, 
due to the 3D nature of its Mikado flow structure.
Now we shall adapt the 2D stationary flow introduced in \cite{Choffrut_DeLellis_Szekelyhidi} to an intermittent form.

First, we specifically choose
\begin{align*}
\Lambda^+ & = \{ \frac{1}{5} (3e_1 \pm 4 e_2), \frac15 (4e_1\pm 3e_2) \}, \\
\Lambda^- & = \{ \frac{1}{5} (-3e_1 \mp 4 e_2), \frac15 (-4e_1\mp 3e_2) \}, 
\end{align*}
{and denote}
\begin{equation}
\Lambda = \Lambda^+ \cup \Lambda^-{.} \label{3.10_Lam}	
\end{equation}
{Then}
\[
\Lambda \subset \mbs^1 \cap \mbq^2, \quad 5 \Lambda \subset \mbz^2
\]
and 
\begin{equation*} 
\min_{\substack{\dir,\dir'\in \Lambda \\ \dir \neq -\dir'}} |\dir+\dir'| \geq \frac{\sqrt{2}}{5}.
\end{equation*}

Now for each $\dir \in \Lambda$ and any frequency parameter $\lambda \in  \mbz^+ \cap 5\mbz$,
we may denote the 2D stationary flow $b_\dir$ and its potential $\psi_\dir$ as
\begin{equation} \label{3.1_def_bpsi}
	b_\dir (x) = b_{\dir,\lambda}(x) := i \dir^\perp \rme^{i\lambda \dir \cdot x} \quad \text{and} \quad  \psi_\dir (x) = \psi_{\dir,\lambda}(x) := \frac{1}{\lambda} \rme^{i\lambda \dir \cdot x}.
\end{equation}
It is easy to check that
\begin{equation} \label{3.2_prop_bpsi}
	b_{\dir,\lambda}(x) = \nabla^\perp \psi_{\dir,\lambda}(x), \quad \divg b_\dir(x) = 0, \quad \perpdot b_{\dir,\lambda}(x) = \Delta \psi_{\dir,\lambda}(x) = - \lambda^2 \psi_{\dir,\lambda}(x),
\end{equation}
\begin{equation} \label{3.2+_conj_bpsi}
	\overline{b_{\dir,\lambda} (x)} = b_{-\dir,\lambda}(x), \quad \overline{\psi_{\dir,\lambda} (x)} = \psi_{-\dir,\lambda}(x),
\end{equation}
and 
\begin{equation} \label{3.3+_est_bpsi}
	\|{b_{\dir,\lambda}}\|_{C^N} \leq \lambda^N, \quad \|{\psi_{\dir,\lambda}}\|_{C^N} \leq \lambda^{N-1}, \quad \forall\, N \in \mbn{,}
\end{equation}
where
\begin{equation*} 
	\dir^\perp = \begin{pmatrix}
	 -k_2 \\ k_1
	\end{pmatrix}, \quad \nabla^\perp = \begin{pmatrix}
		-\partial_{x_2} \\ \partial_{x_1}
	\end{pmatrix}.
\end{equation*}

Moreover, we have
\begin{lem}[Geometric lemma] \label{Lem:3.2}
	Denote $\dM$ as the linear space of $2 \times 2$ symmetric trace-free matrices.
	There exists a set of positive smooth functions $\{ \gamma_\dir \in C^\infty( \dM) \mid \dir \in \Lambda\} $,
	such that for each $\oR \in \dM$, 
	\begin{gather}
		\gamma_{-\dir}(\oR) = \gamma_\dir(\oR), \label{3.13} \\
		\oR = \sum_{\dir\in \Lambda} (\gamma_\dir(\oR))^2  (\dir \ootimes \dir), \label{3.14} \\
		\intertext{and}
		\gamma_\dir (\oR) \ls (1 + |\oR|)^{\frac{1}{2}}. \label{3.14+}
	\end{gather}
\end{lem}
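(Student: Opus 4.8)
The plan is to exploit the specific algebraic structure of the chosen directions $\Lambda$ and reduce the problem to a local inversion statement on the finite-dimensional space $\dM \cong \mbr^2$. First I would compute the rank-one symmetric trace-free matrices $\dir \ootimes \dir$ for the eight directions in $\Lambda$. Since $\dir \ootimes \dir$ depends only on $\dir$ up to sign, the four pairs $\{\pm\xi\}$ give at most four distinct matrices; writing $\dir = (\cos\varphi, \sin\varphi)$ one finds $\dir\ootimes\dir = \tfrac12\begin{pmatrix} \cos 2\varphi & \sin 2\varphi \\ \sin 2\varphi & -\cos 2\varphi \end{pmatrix}$, so identifying $\dM$ with $\mbr^2$ via $\oR \leftrightarrow (R_{11}, R_{12})$, the vector $\dir\ootimes\dir$ corresponds to $\tfrac12(\cos 2\varphi, \sin 2\varphi)$ on the unit circle. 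For the four $\Lambda$-pairs the angles $2\varphi$ are spread around the circle (the $3$-$4$-$5$ directions give $\cos 2\varphi, \sin 2\varphi \in \{\pm 7/25, \pm 24/25\}$), and in particular the convex hull of $\{\pm(\cos2\varphi_j,\sin2\varphi_j)\}_{j}$ contains a neighborhood of the origin in $\mbr^2$.

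Next, I would set up the map $\Gamma\colon (\mbr_{>0})^{4} \to \dM$, $\Gamma(c_1,\dots,c_4) = \sum_j c_j\, (\dir_j \ootimes \dir_j)$, where $j$ runs over the four pairs and $c_j$ plays the role of $\gamma_{\dir_j}^2$; imposing $\gamma_{-\dir}=\gamma_\dir$ makes the ansatz consistent with \eqref{3.13} automatically, and \eqref{3.14} becomes $\Gamma(\gamma_{\dir_1}^2,\dots,\gamma_{\dir_4}^2) = \oR$. Choose a base point: take all $c_j$ equal to a constant $c_0>0$, which by the symmetric spread of the directions yields $\Gamma(c_0,\dots,c_0) = 0$ (the four unit vectors $(\cos2\varphi_j,\sin2\varphi_j)$ sum to zero by the $\pm$ symmetry built into $\Lambda^+,\Lambda^-$). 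Then one solves $\Gamma(c) = \oR$ for $|\oR|$ small by the implicit function theorem / linear algebra: since the four vectors $\dir_j\ootimes\dir_j$ positively span $\mbr^2 = \dM$, for every $\oR$ there is a smooth choice $c(\oR)$ with all components bounded below by a positive constant, at least for $|\oR| \le \delta_0$. For large $|\oR|$ one uses the same positive-spanning property together with a smooth partition-of-unity / rescaling argument: write $\oR = |\oR|\,\hat R$ with $\hat R$ on the unit circle in $\dM$, express each such $\hat R$ as a nonnegative combination $\sum_j a_j(\hat R)\,\dir_j\ootimes\dir_j$ with $a_j$ smooth in $\hat R$ (possible on the compact circle by a partition of unity subordinate to the positive cones spanned by consecutive pairs of the $\dir_j\ootimes\dir_j$), and then set $\gamma_{\dir_j}(\oR)^2 = c_0 + |\oR|\, a_j(\hat R)$ suitably smoothed near $\oR = 0$ to match the local solution. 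This produces smooth positive $\gamma_\dir$ on all of $\dM$, and the growth bound $\gamma_\dir(\oR) \ls (1+|\oR|)^{1/2}$ follows immediately since $\gamma_{\dir_j}^2 \ls 1 + |\oR|$.

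The main obstacle is the gluing: producing a \emph{single} family of \emph{everywhere smooth, everywhere strictly positive} functions $\gamma_\dir$ that simultaneously realizes \eqref{3.14} for all $\oR\in\dM$. Near $\oR=0$ the natural parametrization comes from inverting a (possibly non-square) linear map and staying in the positive orthant, while for $|\oR|\to\infty$ one needs the conical/partition-of-unity description; matching these on an annulus without losing smoothness or positivity is the delicate point. This is exactly the kind of statement established in \cite{Choffrut_DeLellis_Szekelyhidi} for 2D stationary flows, so I expect the proof to either cite that construction directly or adapt its partition-of-unity argument; the only genuinely new verification is that the particular eight-element set $\Lambda$ chosen here has the required positive-spanning property for $\dM$, which is the short computation with the $3$-$4$-$5$ vectors sketched above.
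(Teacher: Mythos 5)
Your approach is genuinely different from the paper's, and the gap you flag at the end is real: you do not resolve the gluing problem, and the implicit-function step for small $|\oR|$ also requires choosing a section of the (non-injective) map $c\mapsto\sum_j c_j\,\dir_j\ootimes\dir_j$, which you do not specify. The paper sidesteps all of this with a short explicit construction. The key observation is that any $s\in\mbr$ splits as a difference of two \emph{globally} smooth, \emph{globally} positive functions: set $\Gamma(s)=s+1$ for $s>0$, $\Gamma(s)=1$ for $s\le 0$, and let $\Gamma_*=\Gamma*\tilde\varphi$ for an even bump $\tilde\varphi$. Then $\Gamma_*\in C^\infty$, $1\le\Gamma_*(s)\le s+2$, and $\Gamma_*(s)-\Gamma_*(-s)=s$ for all $s$. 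Writing $\oR=\begin{pmatrix}\oR_{11}&\oR_{12}\\ \oR_{12}&-\oR_{11}\end{pmatrix}$, one then uses the explicit $3$--$4$--$5$ arithmetic of the chosen $\Lambda$ to produce closed-form coefficients of the shape $\gamma_{\dir_j}(\oR)^2=\tfrac{25}{14}\Gamma_*(\pm\oR_{11})+\tfrac{25}{48}\Gamma_*(\pm\oR_{12})$ (with signs depending on $j$), which are manifestly smooth, positive, satisfy \eqref{3.13} by construction, and verify \eqref{3.14} by a direct computation that collapses to the identity $\Gamma_*(s)-\Gamma_*(-s)=s$. The bound \eqref{3.14+} is immediate from $\Gamma_*\le s+2$. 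So while your positive-spanning observation is correct and is the conceptual reason such a decomposition exists, the paper never needs an inverse function theorem, a cone decomposition, or a partition of unity; the entire difficulty you call ``the delicate point'' disappears once one sees the $\Gamma_*$ trick, and without it your sketch is not a complete proof.
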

The proof of this lemma is direct, one may check Appendix A for the details.

Now as in \cite{Buckmuster_Vicol}, in order to define the intermittent flow we first present the 2D Dirichlet kernel
\begin{equation} \label{3.15_Dr}
	D_r(x) = \frac{1}{2r+1} \sum_{k \in \Omega_r} \rme^{i k \cdot x} \ \in C^\infty(\mbt^2)  
\end{equation}
with $r \in \mbz^+$ and 
\[
	\Omega_r = \{ k=(k_1, k_2)^T \mid k_i \in \mbz, -r \leq k_i \leq r  \}.
\]
By a direct calculation, it holds that for $1 < p \leq \infty$, 
\begin{equation} \label{3.16_DrEst}
	\| D_r \|_{L^p} \lesssim r^{1-\frac{2}{p}}, \quad \| D_r \|_{L^2} = 2\pi.
\end{equation}
We shall note that these $L^p$ estimates are different from the ones in 3D case as in \cite{Buckmuster_Vicol}{,}
and this dimensional dependence is partially the reason for which we shall use different scaling for our parameters to be chosen later.
Now we can define the {directed-rescaled} Dirichlet kernel with a temporal shift as
\begin{equation} \label{3.17_eata_def}
	\eta_\dir(t,x) = \eta_{\dir,\lambda,\sigma,r,\mu}(t,x) := 
	\left\{ \begin{alignedat}{2}
	& D_r (\lambda \sigma (\dir\cdot x + \mu t), \lambda \sigma \dir^\perp \cdot x), & \quad \dir & \in \Lambda^+, \\
	& \eta_{-\dir,\lambda,\sigma,r,\mu} (t,x), & \quad \dir & \in \Lambda^-
	\end{alignedat} \right.
\end{equation}
with 
\begin{equation} \label{3.18_eta_est}
	\frac{1}{\mu} \pt \eta_\dir(t,x) = \pm (\dir\cdot \nabla) \eta_\dir(t,x), \quad \forall\, \dir \in \Lambda^\pm 
\end{equation}
and 
\begin{equation} \label{3.19_eta_norm}
	\aint_{\mbt^2} \eta_\dir^2 (t,x) \rmd x = 1, \quad \linfp{\eta_\dir} \lesssim r^{1-\frac2p}, \quad \text{for } 1 < p \leq \infty. 
\end{equation}
Here we use parameters $r, \mu, \sigma^{-1}, \lambda \in \mbn$ with
\begin{equation} \label{3.19+_parameters}
	1 \ll r \ll \mu \ll \sigma^{-1}  \ll \lambda 
\end{equation}
and
\[
	\lambda \sigma \in 5 \mbn,
\]
one may check \eqref{3.64_parameter_choice} to see the specific choice of these parameters.
We shall note that the choice of these parameters, especially that of $\mu$, 
are dimensionally dependent and thus are
different from that of \cite{Buckmuster_Vicol}.

Finally, we could define the intermittent 2D stationary flow as
\begin{equation} \label{3.20_W_def}
	\mbw_\dir(t,x) = \mbw_{\dir,\lambda,\sigma,r,\mu} (t,x)  := \eta_{\dir,\lambda,\sigma,r,\mu} (t,x) b_{\dir,\lambda}(x).
\end{equation}
Similar as the 3D intermittent Beltrami flow presented in \cite{Buckmuster_Vicol},
this intermittent flow possesses several important properties.
First, for the frequency projector $\mbp_{[\lambda_1,\lambda_2]}$:
\[
	\mbp_{[\lambda_1,\lambda_2]} f(x) = \mathcal{F}^{-1} (1_{ \{ \lambda_1 \leq \dir \leq \lambda_2\} } \mathcal{F}(f) )(x),
\]
where $\mathcal{F}$ is the Fourier transform on $\mbt^2$,
and for
\begin{align*}
	\mbp_{\geq \lambda} f & = \mbp_{[\lambda,\infty)} f, \\
	\mbp_{\neq 0} f & = f - \aint f \rmd x,
\end{align*}
one has
\begin{align}
	\mbp_{[{\lambda}/{2}, 2 \lambda]} \mbw_{\dir,\lambda}(t,x) & = \mbw_{\dir,\lambda}, \label{3.21}\\
	\mbp_{[ \lambda/5, 4 \lambda ]} \big( \mbw_{\dir,\lambda} \ootimes \mbw_{\dir',\lambda} \big) & = \mbw_{\dir,\lambda} \ootimes \mbw_{\dir',\lambda}, \quad \forall\, \dir + \dir' \neq 0, \label{3.21+} \\
	\mbp_{\geq (\lambda\sigma)/2} \big(\mbw_{\dir,\lambda} \ootimes \mbw_{\dir',\lambda} \big) & = \mbp_{\neq 0} \big(\mbw_{\dir,\lambda} \ootimes \mbw_{\dir',\lambda} \big), \quad \forall\, \dir,\dir' \in \Lambda. \label{3.26+}
\end{align}
Similarly,
\begin{equation} \label{3.20+_eta_FrePro}
\mbp_{\neq 0} \eta_\dir = \mbp_{\geq {(\lambda\sigma)}/{2}} \eta_\dir.
\end{equation}
Next, one can get
\begin{lem} \label{Lem:3.3}
	For any $\{ a_\dir \mid \dir \in \Lambda \} \subset \mbc$ with $a_{-\dir} = \overline{a_\dir}$, the function
	\begin{equation} \label{3.22_Wdef}
		W(t,x) = \sum_{\dir \in \Lambda} a_\dir \mbw_\dir(t,x) 
	\end{equation}
	is real valued, and for each $\oR \in \dM$, one has 
	\begin{equation} \label{3.23}
		\sum_{\dir \in \Lambda} (\gamma_\dir(\oR))^2 \aint_{\mbt^2} \mbw_\dir \ootimes \mbw_{-\dir} \rmd x= - \oR.
	\end{equation}
\end{lem}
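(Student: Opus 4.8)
\textbf{Proof proposal for Lemma \ref{Lem:3.3}.}

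The plan is to verify the two assertions separately, both of which follow by direct computation from the definitions \eqref{3.1_def_bpsi}, \eqref{3.17_eata_def}, \eqref{3.20_W_def} together with the conjugation identities \eqref{3.2+_conj_bpsi} and the normalization \eqref{3.19_eta_norm}. For the reality of $W$, I would group the sum over $\Lambda$ into pairs $\{\dir,-\dir\}$ and show each pair contributes a real-valued function. By \eqref{3.2+_conj_bpsi} we have $\overline{b_{\dir,\lambda}(x)} = b_{-\dir,\lambda}(x)$, and from the definition \eqref{3.17_eata_def} the kernel $\eta_\dir$ is real-valued (it is a Dirichlet kernel evaluated at real arguments) and satisfies $\eta_{-\dir} = \eta_\dir$ by construction for $\dir \in \Lambda^-$. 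Hence $\overline{\mbw_\dir} = \overline{\eta_\dir}\,\overline{b_{\dir,\lambda}} = \eta_{-\dir} b_{-\dir,\lambda} = \mbw_{-\dir}$, so with the hypothesis $a_{-\dir} = \overline{a_\dir}$ one gets $\overline{a_\dir \mbw_\dir} = a_{-\dir}\mbw_{-\dir}$, and summing over $\dir$ the sum is invariant under conjugation, i.e.\ real-valued.

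For the identity \eqref{3.23}, I would first compute $\mbw_\dir \ootimes \mbw_{-\dir}$. Using \eqref{3.20_W_def}, this equals $\eta_\dir \eta_{-\dir} \, (b_{\dir,\lambda} \ootimes b_{-\dir,\lambda})$. Since $\eta_{-\dir} = \eta_\dir$ as noted above, $\eta_\dir \eta_{-\dir} = \eta_\dir^2$. Moreover $b_{\dir,\lambda} \ootimes b_{-\dir,\lambda} = (i\dir^\perp \rme^{i\lambda\dir\cdot x}) \ootimes (i(-\dir)^\perp \rme^{-i\lambda\dir\cdot x}) = -i\dir^\perp \ootimes (-i\dir^\perp) \cdot 1$; more carefully, since $(-\dir)^\perp = -\dir^\perp$ and the exponentials cancel, this is $(i\dir^\perp)\ootimes(-i\dir^\perp) = \dir^\perp \ootimes \dir^\perp$, a constant matrix independent of $x$. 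Therefore $\aint_{\mbt^2} \mbw_\dir \ootimes \mbw_{-\dir}\rmd x = \big(\aint_{\mbt^2}\eta_\dir^2\rmd x\big)\, (\dir^\perp \ootimes \dir^\perp) = \dir^\perp \ootimes \dir^\perp$ by the normalization \eqref{3.19_eta_norm}. Now one needs the linear-algebra fact that $\dir^\perp \ootimes \dir^\perp = -(\dir \ootimes \dir)$ for $\dir \in \mbs^1$: this is immediate from the explicit formula for $\ootimes$ since rotating $\dir$ by $90^\circ$ swaps $\dir_1^2$ with $\dir_2^2$ and changes the sign of $\dir_1\dir_2$, which is exactly the action of negation on the trace-free symmetric matrix $\dir\ootimes\dir$ (using $\dir_1^2 + \dir_2^2 = 1$). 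Substituting into the Geometric Lemma \ref{Lem:3.2}, identity \eqref{3.14}, gives $\sum_{\dir\in\Lambda}(\gamma_\dir(\oR))^2\,\aint_{\mbt^2}\mbw_\dir\ootimes\mbw_{-\dir}\rmd x = \sum_{\dir\in\Lambda}(\gamma_\dir(\oR))^2\,(-(\dir\ootimes\dir)) = -\oR$, as claimed.

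The computation is essentially routine; the only point requiring care is the bookkeeping of the trace-free tensor product $\ootimes$ and the identity $\dir^\perp\ootimes\dir^\perp = -(\dir\ootimes\dir)$, together with making sure the temporal variable in $\eta_\dir$ plays no role (the $x$-average is taken at fixed $t$, and $\dir^\perp \ootimes \dir^\perp$ carries no $t$-dependence, so the identity holds pointwise in $t$). I do not anticipate any genuine obstacle here; the lemma is a structural identity that sets up the correction term in the convex integration scheme, and its verification is mechanical once the reality of $\eta_\dir$ and the normalization \eqref{3.19_eta_norm} are in hand.
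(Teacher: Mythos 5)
Your proof is correct and follows essentially the same direct-computation route as the paper: conjugation symmetry of the sum via $\overline{\mbw_\dir}=\mbw_{-\dir}$ and $a_{-\dir}=\overline{a_\dir}$ for reality, and the chain $\mbw_\dir\ootimes\mbw_{-\dir}=\eta_\dir^2\,b_\dir\ootimes b_{-\dir}=\eta_\dir^2\,\dir^\perp\ootimes\dir^\perp=-\eta_\dir^2\,\dir\ootimes\dir$ together with the normalization \eqref{3.19_eta_norm} and the geometric identity \eqref{3.14}. (One small inessential remark: the identity $\dir^\perp\ootimes\dir^\perp=-\dir\ootimes\dir$ holds for all $\dir\in\mbr^2$, not only on $\mbs^1$; the parenthetical ``using $\dir_1^2+\dir_2^2=1$'' is not needed.)
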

\begin{proof}
	This result can be checked directly as follows.
	By \eqref{3.2+_conj_bpsi} and \eqref{3.17_eata_def},
	\[
		\overline{W(t,x)} =  \sum_{\dir \in \Lambda} \overline{a_\dir} \overline{\mbw_\dir(t,x)} 
		= \sum_{\dir \in \Lambda} a_{-\dir} \overline{\eta_{\dir}(t,x)} \overline{b_\dir(t,x)} 
		= \sum_{\dir \in \Lambda} a_{-\dir} {\eta_{-\dir}(t,x)} {b_{-\dir}(t,x)} 
		= W(t,x),
	\]
	and
	\[
		\mbw_\dir \ootimes \mbw_{-\dir} = \eta_\dir^2(t,x) \big( b_\dir(x) \ootimes b_{-\dir}(x) \big) 
		=  \eta_\dir^2 (t,x) \big( \dir^\perp \ootimes \dir^\perp \big) 
		=   \eta_\dir^2 (t,x) ( - \dir \ootimes \dir).
	\]
	Then by \eqref{3.14} and \eqref{3.19_eta_norm}, one can get \eqref{3.23}.
\end{proof}

Moreover, after a direct calculation, one can get
\begin{lem} \label{Lem:3.4}
	If one chooses the parameters as in \eqref{3.19+_parameters}, 
	then for any $1 < p \leq \infty,$ and $K, \Nrang$, one has
	\begin{align}
		\linfp{ \mbw_\dir}  + \linfp{ \nabla^N \pt^K \mbw_\dir} \lesssim &\lambda^N \big( \lambda \sigma r \mu \big)^K \ r^{1 - \frac2p}, \label{3.24_mbw_est} \\
		\linfp{ \eta_\dir}  + \linfp{ \nabla^N \pt^K \eta_\dir} \lesssim &\big(\lambda \sigma r \big)^N \big( \lambda \sigma r \mu \big)^K \ r^{1 - \frac2p}. \label{3.25_eta_est} 		
	\end{align}
\end{lem}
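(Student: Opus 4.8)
The plan is to reduce everything to the scaling properties of the 2D Dirichlet kernel $D_r$ and the elementary bounds \eqref{3.3+_est_bpsi} on the stationary profiles $b_{\dir,\lambda}$, together with the normalization \eqref{3.19_eta_norm}. First I would observe that it suffices to treat $\dir \in \Lambda^+$, since for $\dir \in \Lambda^-$ the functions $\mbw_\dir$ and $\eta_\dir$ are defined in \eqref{3.17_eata_def} and \eqref{3.20_W_def} to coincide (up to the harmless reflection $\dir \mapsto -\dir$) with those indexed by $-\dir \in \Lambda^+$, so the same bounds transfer verbatim. So fix $\dir \in \Lambda^+$ and recall $\eta_\dir(t,x) = D_r\big(\lambda\sigma(\dir\cdot x+\mu t),\, \lambda\sigma\,\dir^\perp\cdot x\big)$.

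The key computation is the estimate \eqref{3.25_eta_est} for $\eta_\dir$; the estimate for $\mbw_\dir$ then follows by Leibniz and \eqref{3.3+_est_bpsi}. Write $\Phi(t,x) = \big(\lambda\sigma(\dir\cdot x + \mu t),\, \lambda\sigma\,\dir^\perp\cdot x\big)$, so $\eta_\dir = D_r \circ \Phi$. Each spatial derivative $\nabla$ hitting $\eta_\dir$ produces, by the chain rule, a factor of order $|\nabla\Phi| \lesssim \lambda\sigma$ times a first derivative of $D_r$; since $D_r$ is a trigonometric polynomial of degree $\le r$, differentiating it in its own variables costs at most a factor $r$ per derivative in $L^p$ (a Bernstein-type inequality, or simply termwise differentiation of \eqref{3.15_Dr}). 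Hence $N$ spatial derivatives cost $(\lambda\sigma r)^N$. A time derivative, by \eqref{3.18_eta_est}, is exactly $\mu$ times a spatial derivative in the $\dir$ direction, so each $\pt$ costs an extra $\lambda\sigma r\mu$; applying this $K$ times gives the factor $(\lambda\sigma r\mu)^K$. The surviving $L^p$ norm is that of a derivative of $D_r$ composed with the linear, volume-preserving-up-to-a-constant map $\Phi$; because $\lambda\sigma \in 5\mbn$ and $\dir,\dir^\perp$ have rational coordinates with denominator $5$, the map $x \mapsto (\lambda\sigma\,\dir\cdot x, \lambda\sigma\,\dir^\perp\cdot x)$ descends to a measure-preserving (up to the fixed Jacobian $|\det| = (\lambda\sigma)^2$, which is absorbed after normalizing by the period) self-map of $\mbt^2$, so $\| \partial^N(D_r\circ\Phi)\|_{L^p(\mbt^2)} \lesssim r^N \|D_r\|_{L^p(\mbt^2)} \lesssim r^N\, r^{1-\frac2p}$ by \eqref{3.16_DrEst}. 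Collecting the factors yields \eqref{3.25_eta_est}, and the $K=N=0$ case recovers the normalization and $L^p$ bound already recorded in \eqref{3.19_eta_norm}.

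For \eqref{3.24_mbw_est}, write $\nabla^N\pt^K \mbw_\dir = \nabla^N\pt^K(\eta_\dir\, b_\dir)$ and expand by the Leibniz rule. Since $b_\dir$ is $t$-independent, all $K$ time derivatives fall on $\eta_\dir$; the spatial derivatives distribute, and a derivative on $b_{\dir,\lambda}$ costs $\lambda$ by \eqref{3.3+_est_bpsi} while one on $\eta_\dir$ costs $\lambda\sigma r \le \lambda$ (as $\sigma r \ll 1$ by \eqref{3.19+_parameters}). Using $\|b_\dir\|_{L^\infty} \lesssim 1$ and Hölder to pair the $L^\infty$ factor $b_\dir$ with the $L^p$ factor $\eta_\dir$ (or its derivatives), every term is bounded by $\lambda^N(\lambda\sigma r\mu)^K r^{1-\frac2p}$; the worst case, all spatial derivatives on $b_\dir$, gives exactly the claimed $\lambda^N$ prefactor. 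The main point requiring care — and the only genuine subtlety — is the claim that composing $D_r$ with the anisotropic rescaling $\Phi$ does not distort $L^p$ norms, i.e. that the directed-rescaled kernel is a genuine function on $\mbt^2$ with the same $L^p$ scaling as $D_r$ itself; this is precisely why the integrality conditions $\lambda\sigma \in 5\mbn$ and $5\Lambda \subset \mbz^2$ were imposed, and once that periodicity bookkeeping is in place the rest is a routine chain-rule-and-Hölder computation.
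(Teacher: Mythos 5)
Your proof is correct and is exactly the ``direct calculation'' the paper has in mind (the paper states Lemma \ref{Lem:3.4} without proof). You correctly identify the three ingredients: the chain rule through the linear map $\Phi(t,x) = (\lambda\sigma(\dir\cdot x+\mu t),\,\lambda\sigma\,\dir^\perp\cdot x)$ contributes $\lambda\sigma$ per spatial derivative and $\lambda\sigma\mu$ per time derivative; Bernstein for the degree-$r$ trigonometric polynomial $D_r$ contributes an additional $r$ per derivative; and the integrality condition $\lambda\sigma\in 5\mbn$ with $5\Lambda\subset\mbz^2$ makes $x\mapsto(\lambda\sigma\dir\cdot x,\lambda\sigma\dir^\perp\cdot x)$ an integer-matrix self-cover of $\mbt^2$, which is in fact exactly measure-preserving (the Jacobian cancels the covering multiplicity), so $\|\eta_\dir(t,\cdot)\|_{L^p(\mbt^2)}=\|D_r\|_{L^p(\mbt^2)}$. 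The Leibniz step passing from $\eta_\dir$ to $\mbw_\dir=\eta_\dir b_\dir$ via H\"older, using $\lambda\sigma r\le\lambda$ from \eqref{3.19+_parameters}, is also right. The only minor imprecision is the phrase ``measure-preserving up to the fixed Jacobian'': the map is genuinely measure-preserving on $\mbt^2$ since it is a $(\lambda\sigma)^2$-to-one covering with Jacobian $(\lambda\sigma)^2$, but this does not affect the conclusion.
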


\section{Perturbation}

To present our perturbation terms, 
we first define the temporal cutoff as in \cite{Luo_Titi}.
Let $\Phi_q(t)$ be a smooth cut-off function with
\begin{gather*}
	0 \leq \Phi_q \leq 1, \\
	\Phi_q(t) = 1 \quad \text{on} \ \supp_t \Rls, \\
	\supp \Phi_q(t) \subseteq N_{\ell} (\supp_t \Rls), \\
	\| \Phi_q \|_{C^N_t} \ls \ell^{-N}, \quad \forall\, \Nrang.
\end{gather*}
Then we can set the smooth coefficients
\begin{equation} \label{3.28+_ak_def}
	a_\dir(t,x) = A^{\frac{1}{2}} \varepsilon_{q+1}^{\frac{1}{2}} \gamma_\dir (A^{-1} \varepsilon_{q+1}^{-1} \Rls(t,x)) \Phi_q(t),
\end{equation}
for $\dir \in \Lambda$.
Obviously,
\begin{equation} \label{3.27}
	\supp_t a_\dir \subseteq N_{\ell} (\supp_t \Rls),
\end{equation}
and by \eqref{3.23}, it is easy to see that
\begin{equation} \label{3.34}
\sumk a_\dir^2 \aint \mbw_\dir \ootimes \mbw_{-\dir} \rmd x = - \Rls,
\end{equation}
namely, noting \eqref{3.21+},
\begin{equation} \label{3.42+}
- \Rls = \sumkk a_\dir a_{\dir'} \mbp_{=0} \big( \mbw_\dir \ootimes \mbw_{\dir'} \big).
\end{equation}

Now we can define the perturbation 
\begin{equation} \label{3.29_wq_def}
	w_{q+1} = v_{q+1} - v_\ell := \wqp + \wqc + \wqt,
\end{equation}
where
\begin{align}
	\wqp (t,x) = & \sumk a_\dir(t,x) \mbw_{\dir,\lamqp}(t,x) = \sumk a_\dir(t,x) \eta_{\dir,\lamqp,\sigma,r,\mu}(t,x) b_{\dir,\lamqp}(x), \label{3.30_wqp_def}\\
	\wqc (t,x) = & \sumk \nabla^\perp \big( a_\dir(t,x) \eta_{\dir,\lamqp,\sigma,r,\mu}(t,x) \big) \psi_{\dir,\lamqp}(x), \label{3.31_wqc_def} \\
	\wqt (t,x) = & \frac{1}{\mu} \Big( \sum_{\dir \in \Lambda^+} - \sum_{\dir \in \Lambda^-}  \Big) \mbp_H \mbp_{\neq 0} \big( a_\dir^2(t,x) \mbp_{\neq 0}\eta_{\dir,\lamqp,\sigma,r,\mu}^2(t,x) \dir \big) . \label{3.32_wqt_def}
\end{align}
{Here} $\mbp_H$ is the Helmholtz--Leray projector
\[
	\mbp_H f = f - \nabla \big( \Delta^{-1} \divg f \big).
\]
Moreover, it is direct to check that
\begin{gather}
	\wqp + \wqc = \nabla^\perp \Big( \sumk a_\dir \eta_\dir \psi_\dir \Big), \label{3.33}\\
	\divg (\wqp + \wqc) = 0, \quad \divg \wqt = 0, \label{3.33+} \\
	\supp_t w_{q+1} \subseteq \bigcup_{\dir \in \Lambda} \supp_t a_\dir \subseteq N_{\ell} (\supp_t \Rls). \label{3.39+}
\end{gather}

\section{A Priori Estimates for the Perturbations}

In this section, we derive a priori estimates for the perturbations given above.

\begin{lem}[Estimates for the coefficients] \label{Lem:3.5}
	For $a_\dir$ defined in \eqref{3.28+_ak_def}, one has
	\begin{align}
		\linftwo{a_\dir}  \ls & A^{\frac12} \varepsilon_{q+1}^{\frac12}, \label{3.38_ak_est} \\
		\cN{a_\dir} \ls & \ell^{-2N}, \quad \forall\, \Nrang. \label{3.39}
	\end{align}
\end{lem}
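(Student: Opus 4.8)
The plan is to read off both bounds directly from the definition \eqref{3.28+_ak_def} of $a_\dir$, combined with the geometric properties of $\gamma_\dir$ from Lemma~\ref{Lem:3.2}, the mollified-Reynolds estimates \eqref{+.16_Rl_l1}--\eqref{+.17_Rl_CN}, and the cutoff bounds $\|\Phi_q\|_{C^N_t} \ls \ell^{-N}$. For \eqref{3.38_ak_est}, I would first use \eqref{3.14+} to get the pointwise bound $\gamma_\dir(A^{-1}\varepsilon_{q+1}^{-1}\Rls) \ls (1 + A^{-1}\varepsilon_{q+1}^{-1}|\Rls|)^{1/2}$, so that, since $0 \le \Phi_q \le 1$,
\[
	|a_\dir(t,x)|^2 \ls A\varepsilon_{q+1} + |\Rls(t,x)|.
\]
Integrating in $x$ and taking the $\sup$ in $t$, the first term contributes $A\varepsilon_{q+1}$ (up to the $|\mbt^2|$ factor, absorbed into $\ls$) and the second is exactly $\linfone{\Rls} \le 2A\varepsilon_{q+1}$ by \eqref{+.16_Rl_l1}; taking square roots gives $\linftwo{a_\dir} \ls A^{1/2}\varepsilon_{q+1}^{1/2}$, which is \eqref{3.38_ak_est}.

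For \eqref{3.39}, I would differentiate \eqref{3.28+_ak_def} up to order $N$ in $(t,x)$ and apply the Leibniz rule together with the chain/Faà di Bruno rule for the composition $\gamma_\dir \circ (A^{-1}\varepsilon_{q+1}^{-1}\Rls)$. The derivatives of $\Phi_q$ contribute factors $\ell^{-N}$; the derivatives of the argument $A^{-1}\varepsilon_{q+1}^{-1}\Rls$ contribute, via \eqref{+.17_Rl_CN}, factors of the form $A^{-1}\varepsilon_{q+1}^{-1}\CN{\Rls} \ls A^{-1}\varepsilon_{q+1}^{-1}\ell^{-N}$, while $\gamma_\dir$ and all its derivatives are bounded on the relevant bounded region of $\dM$ (the argument stays in a fixed compact set because of the $C^0$ bound from \eqref{+.17_Rl_CN} with $N=0$, or one simply uses that $\gamma_\dir \in C^\infty(\dM)$ with at-most-polynomial growth, controlled by the same estimate). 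Collecting the worst combination over $N \le 3$, one gets a bound $\cN{a_\dir} \ls A^{1/2}\varepsilon_{q+1}^{1/2}(1 + A^{-1}\varepsilon_{q+1}^{-1})^{?}\ell^{-N}$; since $\ell = \lambda_q^{-20}$ and $\varepsilon_{q+1}^{-1} = \lambda_{q+1}^{2\beta}$ with $\beta$ tiny, the extra $\varepsilon_{q+1}$-powers are dominated by $\ell^{-N}$, and in fact $\ell^{-2N}$ leaves ample room to absorb everything. This yields \eqref{3.39}.

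The only mildly delicate point is making the composition estimate clean: one must check that the argument $A^{-1}\varepsilon_{q+1}^{-1}\Rls(t,x)$ ranges over a set on which $\gamma_\dir$ and finitely many of its derivatives are uniformly bounded. This follows because $\Rls$ is supported where $\Phi_q = 1$ anyway, and on that support \eqref{+.17_Rl_CN} with $N=0$ gives $\|\Rls\|_{L^\infty_{t,x}} \ls \ell^{-0} = 1$ (up to an absolute constant absorbed by $A$), so the argument lies in a fixed ball of $\dM$; alternatively one invokes \eqref{3.14+} and its differentiated analogues directly. I do not expect any real obstacle here — the lemma is a routine bookkeeping step, and the generous exponent $\ell^{-2N}$ (versus the sharp $\ell^{-N}$) is precisely there to swallow the finitely many absolute constants and the small negative powers of $\varepsilon_{q+1}$.
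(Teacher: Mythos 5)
Your overall strategy is the same as the paper's: for \eqref{3.38_ak_est} you square, use \eqref{3.14+} pointwise and \eqref{+.16_Rl_l1} for $\linfone{\Rls}$, which is exactly the paper's computation; for \eqref{3.39} you use Leibniz plus Fa\`a di Bruno on $\gamma_\dir\circ(A^{-1}\varepsilon_{q+1}^{-1}\Rls)$ and note that the resulting $(A^{-1}\varepsilon_{q+1}^{-1})^N\ell^{-N}$ is swallowed by $\ell^{-2N}$, again matching the paper's one-line chain.

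However, your justification of the ``mildly delicate point'' contains an actual error. You assert that \eqref{+.17_Rl_CN} with $N=0$ gives $\|\Rls\|_{L^\infty_{t,x}}\ls 1$, so that the argument $A^{-1}\varepsilon_{q+1}^{-1}\Rls$ stays in a fixed compact subset of $\dM$. Neither half of that is right: \eqref{+.17_Rl_CN} is stated only for $N=1,2,3$ (the macro in the paper expands to $N=1,2,3$), and the $C^0$ bound one extracts from it (via the $N=1$ case) is $\|\Rls\|_{L^\infty_{t,x}}\ls\ell^{-1}$, not $\ls 1$. Consequently $\|A^{-1}\varepsilon_{q+1}^{-1}\Rls\|_{L^\infty_{t,x}}\ls A^{-1}\varepsilon_{q+1}^{-1}\ell^{-1}$ grows without bound in $q$, so the argument of $\gamma_\dir$ is \emph{not} confined to a fixed ball, and one cannot conclude boundedness of $\gamma_\dir$ and its derivatives by compactness. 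The correct fix is the alternative you gesture at: use the explicit structure of $\gamma_\dir$ (square roots of affine expressions in $\Gamma_*$, Appendix A). Then \eqref{3.14+} controls the $C^0$ part with growth $(1+|\cdot|)^{1/2}$, while all derivatives $D^k\gamma_\dir$ with $k\geq 1$ are \emph{globally} bounded on $\dM$ (they decay for large argument). With that input, Fa\`a di Bruno gives $\cN{\gamma_\dir(A^{-1}\varepsilon_{q+1}^{-1}\Rls)}\ls (A^{-1}\varepsilon_{q+1}^{-1})^N\ell^{-N}$ exactly as in the paper, and the rest of your bookkeeping — including the observation that $\varepsilon_{q+1}^{-N}\ll\ell^{-N}$ by \eqref{++.5} — goes through. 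So: same approach, correct final bound, but replace the false ``bounded range'' claim by the global derivative bounds on $\gamma_\dir$.
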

\begin{proof}
%
	
	By Lemma \ref{Lem:3.2} and \eqref{+.16_Rl_l1}--\eqref{+.17_Rl_CN}, we have
	\[
		\linftwo{a_\dir}^2 \ls \int_{\mbt^2} A \varepsilon_{q+1} \cdot \Big( 1 + \frac{|\Rls(t,x)|}{A \varepsilon_{q+1}} \Big) \rmd x \ls A \varepsilon_{q+1} + \Linfone{\Rls} \ls A \varepsilon_{q+1}
	\]
	and
	\begin{align*}
		\cN{a_\dir} \ls & A^{\frac{1}{2}} \varepsilon_{q+1}^{\frac{1}{2}} \| \Phi_q \|_{C^N_t} + A^{\frac{1}{2}} \varepsilon_{q+1}^{\frac{1}{2}} \cN{\gamma_\dir (A^{-1} \varepsilon_{q+1}^{-1} \Rls)} \\
		\ls & A^{\frac{1}{2}} \varepsilon_{q+1}^{\frac{1}{2}} \ell^{-N} + A^{\frac{1}{2}} \varepsilon_{q+1}^{\frac{1}{2}}  \cdot (A^{-1} \varepsilon_{q+1}^{-1} )^{N} \ell^{-N}\\
		\ls & \ell^{-2N},
	\end{align*}
	which leads to \eqref{3.38_ak_est}--\eqref{3.39}.
\end{proof}

Now we present an important tool introduced in \cite{Buckmuster_Vicol}, see also \cite{Modena_Szekelyhidi}.
\begin{lem}[$L^p$ product estimate] \label{Lem:3.6}
	If $f, g \in C^\infty(\mbt^2)$,
	and $g$ is $(\mbt / \kappa)^2$ periodic for some $\kappa \in \mbz^+$, then 
	\begin{equation} \label{3.40}
		\| fg \|_{L^2(\mbt^2)} \leq \| f \|_{L^2(\mbt^2)} \| g \|_{L^2(\mbt^2)} + C \kappa^{-\frac{1}{2}} \| f \|_{C^1(\mbt^2)} \| g \|_{L^2(\mbt^2)}. 
	\end{equation}
\end{lem}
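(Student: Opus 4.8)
The plan is to exploit that $g$, being $(\mbt/\kappa)^2$-periodic, oscillates on the small scale $1/\kappa$: after cutting $\mbt^2$ into the $\kappa^2$ congruent squares that are the period cells of $g$, on each such square $f$ is constant up to an error of size $O(\kappa^{-1}\|f\|_{C^1})$, so replacing $f$ by its cell average produces the main term $\|f\|_{L^2}\|g\|_{L^2}$ while the replacement error produces the gain. (An equivalent route is to note that $\widehat{g}$ is supported in $\kappa\mbz^2$ and run the estimate on the Fourier side, but the physical-space partition is cleaner.)

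More precisely, let $\{Q\}$ be the family of $\kappa^2$ closed squares of side $2\pi/\kappa$ tiling $\mbt^2$, and for each $Q$ set $\bar f_Q = |Q|^{-1}\int_Q f\rmd x$. First I would record two elementary facts: (i) since $Q$ is exactly one period cell of $g$, $\|g\|_{L^2(Q)}^2 = \kappa^{-2}\|g\|_{L^2(\mbt^2)}^2$; and (ii) by the mean value inequality, $\|f-\bar f_Q\|_{L^\infty(Q)}\le \mathrm{diam}(Q)\,\|\nabla f\|_{L^\infty(\mbt^2)}\le C\kappa^{-1}\|f\|_{C^1(\mbt^2)}$. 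Then on each square the triangle inequality in $L^2(Q)$ gives
\[
 \|fg\|_{L^2(Q)} \le |\bar f_Q|\,\|g\|_{L^2(Q)} + \|f-\bar f_Q\|_{L^\infty(Q)}\|g\|_{L^2(Q)} \le |\bar f_Q|\,\kappa^{-1}\|g\|_{L^2(\mbt^2)} + C\kappa^{-2}\|f\|_{C^1(\mbt^2)}\|g\|_{L^2(\mbt^2)}.
\]
Squaring, summing over the $\kappa^2$ squares $Q$, and applying Minkowski's inequality in $\ell^2$ over the index set $\{Q\}$, the first term yields
\[
 \kappa^{-1}\|g\|_{L^2(\mbt^2)}\Big(\sum_Q |\bar f_Q|^2\Big)^{1/2} \le \kappa^{-1}\|g\|_{L^2(\mbt^2)}\Big(\sum_Q |Q|^{-1}\!\int_Q |f|^2\rmd x\Big)^{1/2} \le \|f\|_{L^2(\mbt^2)}\|g\|_{L^2(\mbt^2)},
\]
where I used Jensen's inequality $|\bar f_Q|^2\le |Q|^{-1}\int_Q |f|^2\rmd x$ together with $|Q|^{-1}=\kappa^2(2\pi)^{-2}$ and $\sum_Q\int_Q |f|^2\rmd x = \|f\|_{L^2(\mbt^2)}^2$; the second term yields $(\kappa^2)^{1/2}\,C\kappa^{-2}\|f\|_{C^1}\|g\|_{L^2} = C\kappa^{-1}\|f\|_{C^1(\mbt^2)}\|g\|_{L^2(\mbt^2)}$, which is $\le C\kappa^{-1/2}\|f\|_{C^1(\mbt^2)}\|g\|_{L^2(\mbt^2)}$ since $\kappa\ge 1$. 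Combining the two estimates gives \eqref{3.40}.

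The argument is essentially bookkeeping and I do not anticipate a genuine obstacle. The only points requiring a little care are the normalization of Lebesgue measure on $\mbt^2$ (so that the constant in front of $\|f\|_{L^2}\|g\|_{L^2}$ indeed comes out $\le 1$) and the use of the \emph{exact} identity $\|g\|_{L^2(Q)}^2 = \kappa^{-2}\|g\|_{L^2(\mbt^2)}^2$ rather than a crude bound, since the whole gain rests on counting the period cells precisely. In fact the computation delivers the stronger factor $\kappa^{-1}$ in the error term, but $\kappa^{-1/2}$ as stated is all that is needed in the later applications.
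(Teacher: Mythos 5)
Your proof is correct and self-contained. Note that the paper does not actually prove this lemma: it simply cites Lemma~2.1 of \cite{Modena_Szekelyhidi} and Lemma~3.6 of \cite{Buckmuster_Vicol}. Those references establish an $L^p$ version for $p\in[1,2]$ with exponent $\kappa^{-1/p}$, and the standard argument there applies a decorrelation estimate to $|f|^p|g|^p$ and then takes $p$-th roots; it is precisely this root-taking that degrades the natural $\kappa^{-1}$ gain to $\kappa^{-1/p}$, which is where the $\kappa^{-1/2}$ in the statement comes from at $p=2$. Your period-cell decomposition of $\mbt^2$ stays on the $L^2$ level throughout (triangle inequality on each cell, then $\ell^2$-Minkowski over the cells), so it retains the full $\kappa^{-1}$, as you correctly observe --- a strictly sharper bound than stated, which of course still implies the lemma since $\kappa\ge 1$. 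The bookkeeping is right: with the unnormalized Lebesgue measure on $\mbt^2=\mbr^2/(2\pi\mbz)^2$ one has $|Q|^{-1/2}\kappa^{-1}=(2\pi)^{-1}\le 1$, and both the Jensen step and the exact identity $\|g\|_{L^2(Q)}^2=\kappa^{-2}\|g\|_{L^2(\mbt^2)}^2$ (which holds for any axis-aligned square of side $2\pi/\kappa$ by periodicity, regardless of alignment with the lattice) are used correctly. In short, yours is a genuine alternative to the cited proof: more elementary and slightly stronger for the fixed exponent $p=2$, while the cited route has the advantage of handling all $p\in[1,2]$ uniformly.
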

\begin{proof}
	See Lemma 2.1 of \cite{Modena_Szekelyhidi}, and also Lemma 3.6 of \cite{Buckmuster_Vicol}.
\end{proof}

%
Then we can derive the estimates on the perturbations as follows.

\begin{prop}
	If one chooses the parameters as in \eqref{3.19+_parameters},
	then for $1 < p \leq \infty$ and $\Nrang$, one has
	\begin{gather}
		\linftwo{\wqp} \lesssim A^{\frac{1}{2}} \varepsilon_{q+1}^{\frac12} + \ell^{-2} (\lamqp \sigma)^{-\frac12}, \label{3.42_wqp_inf2} \\
		\linfp{\wqc} + \linfp{\wqt} \lesssim \ell^{-4} \big( \sigma  + \mu^{-1}  \big) r^{2-\frac{2}{p}}, \label{3.44_wqcwqt_infp} \\
		\linfp{\wqp} + \linfp{w_{q+1}} \lesssim \ell^{-4} r^{1-\frac{2}{p}}, \label{3.43_w_infp} \\
		\linfp{\pt \wqp} + \linfp{\pt \wqc} \lesssim \ell^{-4} \lamqp \sigma \mu r^{2-\frac{2}{p}}, \label{3.45_pt_w} \\
		\linfp{\nabla^N \wqp} + \linfp{\nabla^N \wqc} + \linfp{\nabla^N \wqt} \lesssim \ell^{-4N} r^{1-\frac{2}{p}} \lamqp^N. \label{3.46_w_cN}
	\end{gather}
\end{prop}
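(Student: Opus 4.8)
The plan is to estimate each of the three pieces $\wqp$, $\wqc$, $\wqt$ separately, using Lemma~\ref{Lem:3.4} for the building blocks $\mbw_\dir$, $\eta_\dir$, Lemma~\ref{Lem:3.5} for the coefficients $a_\dir$, and the $L^p$ product estimate of Lemma~\ref{Lem:3.6} to gain the decay in $\lamqp\sigma$ that is needed for \eqref{3.42_wqp_inf2}. Since $\Lambda$ has finitely many elements (a fixed absolute number), it suffices to bound a single summand in each sum and absorb the cardinality into the implicit constant. The key numerology to keep in mind is $\|a_\dir\|_{C^N}\ls \ell^{-2N}$, $\|\eta_\dir\|_{L^p}\ls r^{1-2/p}$, $\|b_{\dir,\lamqp}\|_{C^N}\ls\lamqp^N$, $\|\psi_{\dir,\lamqp}\|_{C^N}\ls\lamqp^{N-1}$, and the $C^1$ bound $\|\mbw_\dir\|_{C^1}\ls \lamqp\sigma r\mu\cdot r = \lamqp\sigma r^2\mu$ coming from \eqref{3.24_mbw_est} (here $\lambda=\lamqp$), together with $\ell^{-1}=\lamq^{20}\ll\lamqp$.

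First I would handle \eqref{3.43_w_infp}: for $\wqp$ write $\|\wqp\|_{L^p}\le\sum_\dir\|a_\dir\|_{C^0}\|\eta_\dir\|_{L^p}\|b_{\dir,\lamqp}\|_{C^0}\ls \ell^{-2}\cdot r^{1-2/p}\cdot 1\ls\ell^{-4}r^{1-2/p}$ (using $|\dir^\perp|=1$). For $\wqc$ I would use the identity $\wqc=\sum_\dir\nabla^\perp(a_\dir\eta_\dir)\psi_\dir$: the $\nabla^\perp$ either hits $a_\dir$, producing $\|a_\dir\|_{C^1}\|\eta_\dir\|_{L^p}\|\psi_\dir\|_{C^0}\ls\ell^{-2}\cdot r^{1-2/p}\cdot\lamqp^{-1}$, or hits $\eta_\dir$, producing $\|a_\dir\|_{C^0}\|\nabla\eta_\dir\|_{L^p}\|\psi_\dir\|_{C^0}\ls\ell^{-2}\cdot\lamqp\sigma r\cdot r^{1-2/p}\cdot\lamqp^{-1}=\ell^{-2}\sigma r^{2-2/p}$; the second term dominates and gives the $\sigma r^{2-2/p}$ in \eqref{3.44_wqcwqt_infp}. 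For $\wqt$, since $\mbp_H\mbp_{\neq0}$ is bounded on $L^p$ for $1<p<\infty$ (and the $p=\infty$ endpoint is handled by interpolation or by noting the flow is band-limited so one can afford a small $\lamqp^{\text{tiny}}$ loss, or more simply by first estimating in a fixed large finite $p$), one gets $\|\wqt\|_{L^p}\ls\mu^{-1}\|a_\dir^2\mbp_{\neq0}\eta_\dir^2\|_{L^p}\ls\mu^{-1}\|a_\dir\|_{C^0}^2\|\eta_\dir\|_{L^{2p}}^2\ls\mu^{-1}\ell^{-4}r^{2-2/p}$, which is the other half of \eqref{3.44_wqcwqt_infp}. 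Then \eqref{3.43_w_infp} for $w_{q+1}$ follows from the triangle inequality together with \eqref{3.42_wqp_inf2}'s companion bound, since $\sigma r^{2-2/p}+\mu^{-1}r^{2-2/p}\ls r^{1-2/p}$ under \eqref{3.19+_parameters}.

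Next, \eqref{3.42_wqp_inf2} is the one genuinely using Lemma~\ref{Lem:3.6}. Here I would write $\wqp=\sum_\dir a_\dir\eta_\dir b_{\dir,\lamqp}$ and apply the product estimate with $f=a_\dir$ and $g=\eta_\dir b_{\dir,\lamqp}$, which is $(\mbt/\kappa)^2$-periodic with $\kappa$ comparable to $\lamqp\sigma$ (this is exactly the point of the rescaled Dirichlet kernel); Lemma~\ref{Lem:3.6} gives $\|a_\dir\eta_\dir b_\dir\|_{L^2}\le\|a_\dir\|_{L^2}\|\eta_\dir b_\dir\|_{L^2}+C(\lamqp\sigma)^{-1/2}\|a_\dir\|_{C^1}\|\eta_\dir b_\dir\|_{L^2}$. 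Using $\|\eta_\dir b_\dir\|_{L^2}\ls\|\eta_\dir\|_{L^2}\cdot1\ls1$ (the $L^2$-normalization \eqref{3.19_eta_norm}), $\|a_\dir\|_{L^2}\ls A^{1/2}\varepsilon_{q+1}^{1/2}$ from \eqref{3.38_ak_est}, and $\|a_\dir\|_{C^1}\ls\ell^{-2}$, this yields $\|\wqp\|_{L^2}\ls A^{1/2}\varepsilon_{q+1}^{1/2}+\ell^{-2}(\lamqp\sigma)^{-1/2}$, which is \eqref{3.42_wqp_inf2}; one must be slightly careful that $\eta_\dir b_\dir$ is not itself $(\mbt/\kappa)^2$-periodic because of the $e^{i\lamqp\dir\cdot x}$ factor — the clean fix is to instead pair $f=a_\dir b_{\dir,\lamqp}$ (still $C^1$-bounded by $\ell^{-2}\lamqp$, which would spoil the estimate) — so the correct route is to keep $g$ as the fully-rescaled kernel $D_r(\lamqp\sigma\,\cdot)$ alone, which \emph{is} $(\mbt/(\lamqp\sigma))^2$-periodic, and put everything else (including the $\lamqp$-frequency oscillation of $b_\dir$) into $f$; but then $\|f\|_{C^1}\ls\ell^{-2}\lamqp$ and the gain $(\lamqp\sigma)^{-1/2}$ only partially compensates. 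The honest statement is that one applies Lemma~\ref{Lem:3.6} after first absorbing the $b_\dir$ oscillation, i.e. one uses that $\mbw_\dir$ itself has the structure (band-limited function)$\times$(rescaled Dirichlet kernel) at \emph{disjoint} frequency scales $\lamqp$ and $\lamqp\sigma$ respectively, so the correct pairing is $g=$ (the $\lamqp\sigma$-periodic rescaled kernel), $f=a_\dir(x)e^{i\lamqp\dir\cdot x}\dir^\perp$ with $\|f\|_{C^1}\ls\ell^{-2}\lamqp$ — and then the second term is $\ell^{-2}\lamqp(\lamqp\sigma)^{-1/2}=\ell^{-2}(\lamqp/\sigma)^{1/2}$, which is \emph{not} what \eqref{3.42_wqp_inf2} claims. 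Resolving this correctly — namely choosing the decomposition so the $C^1$ cost is only $\ell^{-2}$, which forces $g$ to carry the full $\lamqp$-oscillation and hence requires the observation that $\eta_\dir b_{\dir,\lamqp}$ \emph{is} periodic at scale $\lamqp\sigma / \gcd$ type considerations using $\lamqp\sigma\in5\mbn$ and $5\Lambda\subset\mbz^2$ — is the main obstacle, and I expect the paper handles it by exploiting that $\lamqp\dir\in\mbz^2$ (since $\lamqp\in5\mbn$, $5\dir\in\mbz^2$) so that multiplying the $(\mbt/(\lamqp\sigma))^2$-periodic kernel by $e^{i\lamqp\dir\cdot x}$ preserves $(\mbt/(\lamqp\sigma))^2$-periodicity when $\sigma^{-1}\in\mbn$, making $g=\eta_\dir b_{\dir,\lamqp}/|\dir^\perp|$ genuinely $\kappa$-periodic with $\kappa=\lamqp\sigma$ and $f=a_\dir$ with $\|f\|_{C^1}\ls\ell^{-2}$, exactly giving \eqref{3.42_wqp_inf2}.

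Finally, \eqref{3.45_pt_w} and \eqref{3.46_w_cN} are routine Leibniz-rule bookkeeping. For \eqref{3.46_w_cN}: distributing $\nabla^N$ over the product $a_\dir\eta_\dir b_{\dir,\lamqp}$ (resp. the $\wqc,\wqt$ expressions), the worst term is when all derivatives land on $b_{\dir,\lamqp}$ giving a factor $\lamqp^N$, while derivatives on $a_\dir$ cost $\ell^{-2}$ each and derivatives on $\eta_\dir$ cost $\lamqp\sigma r\le\lamqp$ each; since $\ell^{-2}\le\lamqp$ and $\ell^{-1}=\lamq^{20}$, everything is bounded by $\ell^{-4N}\lamqp^N r^{1-2/p}$ after collecting the $L^p$ norm $\|\eta_\dir\|_{L^p}\ls r^{1-2/p}$ (for $\wqt$ one uses $\|\eta_\dir^2\|_{L^p}=\|\eta_\dir\|_{L^{2p}}^2\ls r^{2-2/p}$ but the extra $\sigma$ or $\mu^{-1}$ factor and $\mbp_H\mbp_{\neq0}$ boundedness bring it back in line with, in fact below, the stated bound). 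For \eqref{3.45_pt_w} on $\wqp$, a time derivative hits either $a_\dir$, costing $\|a_\dir\|_{C^1_t}\ls\ell^{-2}$, or $\eta_\dir$, costing $\lamqp\sigma r\mu$ by \eqref{3.24_mbw_est}/\eqref{3.18_eta_est} with the extra $r\mu$ from the $\mu t$ shift inside $D_r$; the $\eta_\dir$ term dominates and produces $\ell^{-2}\cdot\lamqp\sigma r\mu\cdot r^{1-2/p}=\ell^{-4}\lamqp\sigma\mu r^{2-2/p}$, and the same for $\wqc$ after accounting for the extra $\psi_\dir\sim\lamqp^{-1}$ and the extra $\nabla^\perp$. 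I would assemble all five bounds, cite \eqref{3.19+_parameters} wherever a lower-order term is being absorbed, and note once at the start that all implicit constants depend only on $|\Lambda|$ and the mollifier, hence are absolute.
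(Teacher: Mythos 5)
Your proposal is correct and follows essentially the same route as the paper's proof: Lemma~\ref{Lem:3.6} for the $L^2$ estimate \eqref{3.42_wqp_inf2} and Leibniz-rule bookkeeping with Lemma~\ref{Lem:3.4}, Lemma~\ref{Lem:3.5} and $L^p$-boundedness of $\mbp_H\mbp_{\neq0}$ for the rest. The lengthy digression about how to pair $f$ and $g$ in the product lemma is self-correcting and lands on exactly the paper's (tacitly asserted) fact, namely that $\mbw_\dir=\eta_\dir b_{\dir,\lamqp}$ itself is $(\mbt/\kappa)^2$-periodic with $\kappa\sim\lamqp\sigma$ (precisely $\kappa=\lamqp\sigma/5$, using $5\dir\in\mbz^2$, $\lamqp\sigma\in5\mbn$, $\sigma^{-1}\in\mbn$), so $f=a_\dir$, $g=\mbw_\dir$ is the correct pairing and yields \eqref{3.42_wqp_inf2} with only an absolute constant $\sqrt5$ loss.
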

\begin{proof}
	Due to \eqref{3.17_eata_def} and \eqref{3.20_W_def},
	$\mbw_\dir(t,\cdot)$ is $\big(\mbt / (\lambda \sigma)\big)^2$-periodic.
	Thus noting the definition \eqref{3.30_wqp_def} of $\wqp$, 
	and applying Lemma \ref{Lem:3.6}, one can get
	\[
		\linftwo{\wqp} \lesssim \linftwo{a_\dir} \linftwo{\mbw_\dir} + (\lamqp \sigma)^{-\frac12} \| a_\dir \|_{C^1_{t,x}} \linftwo{\mbw_\dir},
	\]
	which, by \eqref{3.24_mbw_est}, \eqref{3.38_ak_est}--\eqref{3.39}, 
	leads to \eqref{3.42_wqp_inf2}.
	Meanwhile, by \eqref{3.24_mbw_est}, 
	\[
		\linfp{\wqp} \lesssim \| a_\dir \|_{C^0_{t,x}} \linfp{\mbw_\dir} \lesssim \ell^{-2} r^{1-\frac{2}{p}}.
	\]
	Noting furthermore the definitions \eqref{3.31_wqc_def}--\eqref{3.32_wqt_def} of $\wqc$ and $\wqt$, 
	and using \eqref{3.3+_est_bpsi}, \eqref{3.25_eta_est},
	\begin{align*}
		\linfp{\wqc}  \lesssim & \| \psi_\dir \|_{L^\infty_x} \| {a_\dir} \|_{C^1_{t,x}} \big( \linfp{\eta_\dir} + \linfp{\nabla \eta_\dir} \big) \\
		\lesssim & \ell^{-2} \big( \sigma r + \lamqp^{-1} \big) r^{1-\frac{2}{p}} \lesssim \ell^{-2} \sigma  r^{2-\frac{2}{p}},\\
		\linfp{\wqt} \lesssim & \frac{1}{\mu} \| a_\dir \|_{C^0_{t,x}}^2 \linftwopsq{\eta_\dir} \\
		\lesssim & \ell^{-4} \frac{1}{\mu} r^{2-\frac{2}{p}},
	\end{align*}
	which yields \eqref{3.44_wqcwqt_infp}--\eqref{3.43_w_infp}.
	
	Similarly, by \eqref{3.24_mbw_est}--\eqref{3.25_eta_est} of Lemma \ref{Lem:3.4},
	\begin{align*}
		\linfp{\pt \wqp} \lesssim & \cNN{a_\dir}{1} \big( \linfp{\mbw_\dir} + \linfp{\pt \mbw_\dir} \big) \\
		\lesssim & \ell^{-2}  \lamqp \sigma r \mu \, r^{1-\frac{2}{p}} \\
		\linfp{\pt \wqc} \lesssim & \| \psi_\dir \|_{L^\infty_{t,x}} \cNN{a_\dir}{2} \big( \linfp{\eta_\dir} + \linfp{\pt \nabla \eta_\dir} \big) \\
		\lesssim & \frac{1}{\lambda} \ell^{-4}  \lamqp^2 \sigma^2 r^2 \mu \, r^{1-\frac{2}{p}},
	\end{align*}
	which leads to \eqref{3.45_pt_w}.
	
	At last, 
	\begin{align*}
		\linfp{\nabla^N \wqp} \lesssim &  \cN{a_\dir} \big( \linfp{\mbw_\dir} + \linfp{\nabla^N \mbw_\dir} \big) \\
		\lesssim & \ell^{-2N} \lamqp^N r^{1-\frac{2}{p}}, \\
		\linfp{\nabla^N \wqc} \lesssim & \cNN{a_\dir}{N+1} \big( \cN{\psi_\dir} (\linfp{\eta_\dir} + \linfp{\nabla \eta_\dir}) + \cNN{\psi_\dir}{0} \linfp{\nabla^{N+1} \eta_\dir} \big) \\
		\lesssim & \ell^{-2N-2}  \lamqp^N r^{1-\frac{2}{p}}, \\
		\linfp{\nabla^N \wqt} \lesssim & \frac{1}{\mu} \cN{a_\dir^2} \big( \linftwop{\eta_\dir} \linftwop{\nabla^N \eta_\dir} \big) \\
		\lesssim & \ell^{-4N} \frac{1}{\mu} \big(\lamqp \sigma r\big)^N r^{2-\frac{2}{p}} \\
		\lesssim & \ell^{-4N} \lamqp^N r^{1-\frac{2}{p}},
	\end{align*}
	which yields \eqref{3.46_w_cN}.
\end{proof}

\section{Anti-divergence Operator and Estimates on the Reynolds Stress Tensor}

As in \cite{DeLellis_Szekelyhidi_InvMath} and \cite{Choffrut_DeLellis_Szekelyhidi}, we shall define the anti-divergence operator $\opR$ as 
\begin{defn}
	For $f \in C^0(\mbt^2,\mbr^2)$, set
	\begin{equation} \label{3.47_opR}
		\opR f = \nabla g + (\nabla g)^T - (\divg g) \id,
	\end{equation}
	where $g$ satisfies
	\[
		\Delta g = f - \aint_{\mbt^2} f \rmd x \quad \text{and} \quad \aint_{\mbt^2} g = 0.
	\]
\end{defn}

\begin{lem}[Lemma 10 of \cite{Choffrut_DeLellis_Szekelyhidi}, Properties of the anti-divergence operator] \label{Lem:3.9}
	For any $f \in \allowbreak C^0(\mbt^2, \allowbreak \mbr^2)$ with $\aint_{\mbt^2} f \rmd x =0$, 
	one has 
	\[
		(\opR f(x))^T = \opR f(x), \quad \tr(\opR f(x)) = 0, \quad \forall\, x \in \mbt^2
	\]
	and
	\[
		\divg \opR f = f, \quad \aint_{\mbt^2} \opR f(x) \rmd x = 0.
	\]
\end{lem}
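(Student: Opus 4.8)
The statement to prove is Lemma~\ref{Lem:3.9}, which records the basic algebraic and analytic properties of the anti-divergence operator $\opR$ defined in \eqref{3.47_opR}. The plan is to verify each of the four claimed properties directly from the definition $\opR f = \nabla g + (\nabla g)^T - (\divg g)\id$, where $g$ solves $\Delta g = f - \aint_{\mbt^2} f\,\rmd x$ with $\aint_{\mbt^2} g = 0$. Since $\aint_{\mbt^2} f\,\rmd x = 0$ by hypothesis, the Poisson equation $\Delta g = f$ is solvable on $\mbt^2$ with a unique mean-zero solution $g \in C^1(\mbt^2,\mbr^2)$ (indeed smooth if $f$ is, by elliptic regularity; for $f \in C^0$ one works componentwise with the standard periodic Green's function, or simply on the Fourier side).

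First I would check symmetry and trace-freeness, which are immediate from the formula. The matrix $\nabla g + (\nabla g)^T$ is symmetric by construction, and subtracting the symmetric matrix $(\divg g)\id$ keeps it symmetric, so $(\opR f)^T = \opR f$. For the trace: $\tr(\nabla g + (\nabla g)^T) = 2\,\tr(\nabla g) = 2\,\divg g$, while $\tr((\divg g)\id) = 2\,\divg g$ in two dimensions, so $\tr(\opR f) = 0$ pointwise. Next, the mean-zero property: since $g$ has zero spatial mean and is periodic, every entry of $\nabla g$, $(\nabla g)^T$, and $(\divg g)\id$ is a spatial derivative of a periodic function, hence has zero integral over $\mbt^2$; therefore $\aint_{\mbt^2}\opR f\,\rmd x = 0$.

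The only computational point is the divergence identity $\divg \opR f = f$. Taking the divergence row by row: the $i$-th component of $\divg \opR f$ is $\sum_j \partial_j\big(\partial_i g_j + \partial_j g_i - (\divg g)\delta_{ij}\big) = \partial_i(\divg g) + \Delta g_i - \partial_i(\divg g) = \Delta g_i = (\Delta g)_i = f_i$, using $\Delta g = f$ (the mean term dropped since it is constant, or is zero by hypothesis). This establishes $\divg \opR f = f$. I would present this index computation explicitly since it is the substantive line; everything else is bookkeeping. There is no real obstacle here — the lemma is a direct verification — so the ``hard part'' is merely being careful that the two-dimensional trace cancellation $\tr((\divg g)\id) = 2\,\divg g$ is exactly what makes $\opR f$ trace-free, and that the solvability/regularity of the Poisson problem on $\mbt^2$ is invoked with the mean-zero compatibility condition that is assumed in the statement.
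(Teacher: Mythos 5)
Your verification is correct and complete. The paper itself does not supply a proof of this lemma; it simply cites Lemma~10 of \cite{Choffrut_DeLellis_Szekelyhidi}, so there is no in-text argument to compare against. Your direct computation is exactly the standard one that appears in that reference: symmetry is immediate from the form $\nabla g + (\nabla g)^T - (\divg g)\id$; trace-freeness uses the two-dimensional identity $\tr((\divg g)\id) = 2\,\divg g$; the mean-zero property follows from periodicity since every entry of $\opR f$ is a spatial derivative of a periodic function; and the index calculation $(\divg \opR f)_i = \partial_i(\divg g) + \Delta g_i - \partial_i(\divg g) = \Delta g_i = f_i$ (using $\aint_{\mbt^2} f = 0$ to drop the mean correction) gives the divergence identity. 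One small point worth keeping in mind when presenting this: the regularity clause is slightly delicate for $f$ merely in $C^0$, since $g$ solving $\Delta g = f$ need not be $C^2$ classically; the identities should then be read distributionally or, equivalently, on the Fourier side, which is how the operator is actually used in the paper (the mollified data $\Rls$ is smooth, so this never causes trouble). Otherwise nothing is missing.
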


Moreover, with standard Calderon--Zygmund estimates and Schauder estimates, one can get
\begin{lem} \label{Lem:3.10}
	For $1 < p < \infty$,
	\begin{gather}
		\| \opR \|_{L^p \to W^{1,p}} \lesssim 1, \quad \| \opR \|_{C^0 \to C^0} \lesssim 1, \label{3.49} \\
		\| \opR \mbp_{\neq 0} v \|_{L^p} \lesssim \big\| |\nabla|^{-1} \mbp_{\neq 0} v  \big\|_{L^p}. \label{3.50}
	\end{gather}
\end{lem}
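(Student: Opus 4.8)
The plan is to reduce all three estimates to the mapping properties of $\nabla\Delta^{-1}$ and $\nabla^2\Delta^{-1}$ acting on mean-free functions. Write $g=\Delta^{-1}\mbp_{\neq0}f$, so that by the definition \eqref{3.47_opR} the matrix $\opR f$ is a fixed linear combination of the entries of $\nabla g$ and of $(\divg g)\,\id$; hence it suffices to estimate $\nabla g$ and $\nabla^2 g$. On $\mbt^2$ one has $g=-G*\mbp_{\neq0}f$, where $G$ is the periodised Newtonian potential, smooth away from $2\pi\mbz^2$ and satisfying $|\nabla G(x)|\ls|x|^{-1}$, $|\nabla^2 G(x)|\ls|x|^{-2}$ near the lattice. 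In particular $\nabla G\in L^1(\mbt^2)$, whereas $\nabla^2 G$ is a Calder\'on--Zygmund kernel. Equivalently, on the Fourier side, for $\kappa\in\mbz^2\setminus\{0\}$ one has $\widehat{\nabla g}(\kappa)=\tfrac{-i\kappa}{|\kappa|}\cdot\tfrac{1}{|\kappa|}\,\widehat f(\kappa)$ and $\widehat{\nabla^2 g}(\kappa)=\tfrac{\kappa\otimes\kappa}{|\kappa|^2}\,\widehat f(\kappa)$, so $\nabla g$ is the vector of Riesz transforms applied to $|\nabla|^{-1}\mbp_{\neq0}f$, while $\nabla^2 g$ is a matrix of double Riesz transforms applied to $\mbp_{\neq0}f$.

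For \eqref{3.49} I would argue as follows. By Young's inequality, $\|\nabla g\|_{L^p}\le\|\nabla G\|_{L^1}\,\|\mbp_{\neq0}f\|_{L^p}\ls\|f\|_{L^p}$ for all $1\le p\le\infty$ (alternatively, $\nabla g$ has zero spatial mean, so the Poincar\'e inequality on $\mbt^2$ reduces this to the bound on $\nabla^2 g$), and the Calder\'on--Zygmund theorem gives $\|\nabla^2 g\|_{L^p}\ls\|\mbp_{\neq0}f\|_{L^p}\ls\|f\|_{L^p}$ for $1<p<\infty$; adding the two yields $\|\opR f\|_{W^{1,p}}\ls\|f\|_{L^p}$. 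For the $C^0$ estimate the key point is that $\opR$ gains a derivative: $\|\opR f\|_{C^0}\ls\|\nabla g\|_{C^0}=\|\nabla G*\mbp_{\neq0}f\|_{C^0}\le\|\nabla G\|_{L^1}\|\mbp_{\neq0}f\|_{L^\infty}\ls\|f\|_{C^0}$, and $\nabla g$ is continuous since the convolution of an $L^1$ kernel with a continuous function is continuous. Estimate \eqref{3.50} is then immediate from the Fourier representation applied with $f=\mbp_{\neq0}v$: here $\mbp_{\neq0}v$ is already mean-free, $\opR\mbp_{\neq0}v$ is controlled by $\nabla g$, which is the vector of Riesz transforms of $|\nabla|^{-1}\mbp_{\neq0}v$, and the $L^p$-boundedness of the Riesz transforms for $1<p<\infty$ yields $\|\opR\mbp_{\neq0}v\|_{L^p}\ls\big\||\nabla|^{-1}\mbp_{\neq0}v\big\|_{L^p}$.

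The two $L^p$ statements are precisely the classical Calder\'on--Zygmund and Schauder estimates for the Poisson equation on $\mbt^2$, and the symmetry and trace-freeness of $\opR f$ have already been recorded in Lemma \ref{Lem:3.9}; the one point deserving care is the $C^0\to C^0$ bound, because singular integral operators are not bounded on $L^\infty$ and a direct Calder\'on--Zygmund argument is unavailable there. The remedy, as above, is to use that $\opR$ is an operator of order $-1$ with a locally integrable kernel, so that the $C^0$ bound follows from Young's inequality alone. This is the only genuinely delicate step; the rest is bookkeeping, in particular checking that all implied constants are absolute and therefore absorbable by $A$.
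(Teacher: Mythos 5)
Your proposal is correct, and it fills in the details the paper leaves to the reader: the paper simply asserts the lemma follows ``with standard Calder\'on--Zygmund estimates and Schauder estimates'' and gives no proof. Your reduction of $\opR f$ to $\nabla g$ with $g=\Delta^{-1}\mbp_{\neq 0}f$, the Fourier-side identification of $\nabla g$ as a Riesz transform of $|\nabla|^{-1}\mbp_{\neq 0}f$ and of $\nabla^2 g$ as a matrix of double Riesz transforms, and the resulting $L^p$ bounds via Calder\'on--Zygmund theory, are exactly the intended route for \eqref{3.49} ($L^p\to W^{1,p}$) and for \eqref{3.50}. Where you depart from the paper's one-line attribution is the $C^0\to C^0$ bound: the paper invokes ``Schauder estimates,'' which is really a $C^{0,\alpha}$ theory and does not literally yield a $C^0$ endpoint, whereas you instead use that $\opR$ is an order $-1$ operator with kernel $\nabla G\in L^1(\mbt^2)$ (the gradient of the periodised Green's function has an $|x|^{-1}$ singularity, integrable in two dimensions), so that Young's inequality and dominated convergence give boundedness and continuity. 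This is a cleaner and more precise justification of the $C^0$ endpoint than the paper's citation, and you correctly flag it as the one point deserving care. The rest matches.
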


{And} we could use the following lemma to gain a $\lambda^{-1}$ weight when we apply $\opR$ on certain terms.
\begin{lem} \label{Lem:3.11}
	For any given $1 < p < \infty$, $\lambda \in \mbz^+$, $a \in C^2(\mbt^2,\mbr)$ and $f \in L^p(\mbt^2,\mbr^2)$, one has
	\[
		\big\| |\nabla|^{-1} \mbp_{\neq 0} (a \mbp_{\geq \lambda} f ) \big\|_{L^p} \lesssim \lambda^{-1} \|a\|_{C^2} \|f\|_{L^p}.
	\]
\end{lem}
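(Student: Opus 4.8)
\textbf{Proof proposal for Lemma \ref{Lem:3.11}.}

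The plan is to reduce the estimate to a composition of three bounded operations on $L^p(\mbt^2)$: a commutator-type rearrangement that moves the multiplier $|\nabla|^{-1}\mbp_{\neq 0}$ past the multiplication by $a$, the gain of the factor $\lambda^{-1}$ coming from the frequency localization $\mbp_{\geq\lambda}f$, and the Calder\'on--Zygmund bound for the remaining operator. First I would write $|\nabla|^{-1}\mbp_{\neq 0}(a\,\mbp_{\geq\lambda}f)$ and insert an extra frequency truncation: since the output is being hit by $|\nabla|^{-1}\mbp_{\neq 0}$ and $\mbp_{\geq\lambda}f$ has frequencies of size $\gtrsim\lambda$, while $a$ contributes only a fixed (bounded) band of frequencies, the product $a\,\mbp_{\geq\lambda}f$ has essentially all its relevant low-frequency content removed; more precisely one can write $a\,\mbp_{\geq\lambda}f = \mbp_{\geq\lambda/2}\big(a\,\mbp_{\geq\lambda}f\big) + \mbp_{<\lambda/2}\big(a\,\mbp_{\geq\lambda}f\big)$ and observe that the second piece is a high-high-to-low interaction controlled by the smoothness of $a$.

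Concretely, I would introduce the operator $T_\lambda := |\nabla|^{-1}\mbp_{\geq\lambda/2}$, which by the Mikhlin--H\"ormander multiplier theorem satisfies $\|T_\lambda\|_{L^p\to L^p}\lesssim \lambda^{-1}$ for $1<p<\infty$, with constant independent of $\lambda$ (rescale the standard bound $\||\nabla|^{-1}\mbp_{[1/2,\infty)}\|_{L^p\to L^p}\lesssim 1$). Applying $T_\lambda$ to $a\,\mbp_{\geq\lambda}f$ and using $\|a\,\mbp_{\geq\lambda}f\|_{L^p}\le\|a\|_{C^0}\|f\|_{L^p}$ (here the boundedness of $\mbp_{\geq\lambda}$ on $L^p$ is used, which again is Mikhlin, or alternatively one keeps $\mbp_{\geq\lambda}$ absorbed into a redefined $f$) gives the desired $\lambda^{-1}\|a\|_{C^2}\|f\|_{L^p}$ bound for the high-frequency part. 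For the low-frequency remainder $|\nabla|^{-1}\mbp_{\neq0}\mbp_{<\lambda/2}(a\,\mbp_{\geq\lambda}f)$, I would note that $\mbp_{<\lambda/2}(a\,\mbp_{\geq\lambda}f)$ is, up to harmless terms, $\mbp_{<\lambda/2}\big((\mbp_{\geq\lambda/4}a)\,\mbp_{\geq\lambda}f\big)$, since the part of $a$ with frequencies $<\lambda/4$ paired with $\mbp_{\geq\lambda}f$ produces only frequencies $>3\lambda/4$, which are killed by $\mbp_{<\lambda/2}$. Then $\|\mbp_{\geq\lambda/4}a\|_{C^0}\lesssim\lambda^{-2}\|a\|_{C^2}$ by Bernstein, and on the support of $\mbp_{\neq0}\mbp_{<\lambda/2}$ the operator $|\nabla|^{-1}$ costs only a bounded factor (frequencies are $\gtrsim 1$ and $\lesssim\lambda$, so $|\nabla|^{-1}$ is bounded by an absolute constant there); combining, this remainder is $\lesssim\lambda^{-2}\|a\|_{C^2}\|f\|_{L^p}\le\lambda^{-1}\|a\|_{C^2}\|f\|_{L^p}$.

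The main obstacle I anticipate is purely bookkeeping rather than conceptual: one must make sure that all the Littlewood--Paley truncations ($\mbp_{\geq\lambda}$, $\mbp_{<\lambda/2}$, $\mbp_{\neq0}$) are $L^p$-bounded uniformly in $\lambda$ — this is standard Mikhlin multiplier theory on the torus, and I would simply cite it — and that the frequency-support arithmetic in the high-high-to-low splitting is done carefully enough that no term with an uncontrolled low-frequency denominator $|\nabla|^{-1}$ survives. A clean alternative that avoids the splitting altogether is to observe that $\mbp_{\neq0}(a\,\mbp_{\geq\lambda}f)$ and $|\nabla|^{-1}$ applied to it can be handled directly via the kernel of $|\nabla|^{-1}$ together with integration by parts against the oscillation of $\mbp_{\geq\lambda}f$; but the Littlewood--Paley route above is shorter and is the one I would write up.
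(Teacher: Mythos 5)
Your proposal is correct and is essentially the paper's argument (which follows Lemma~B.1 of \cite{Buckmuster_Vicol}): the paper performs the paraproduct split $a = \mbp_{\leq\lambda/2}a + \mbp_{\geq\lambda/2}a$, gaining $\lambda^{-1}$ from $|\nabla|^{-1}\mbp_{\geq\lambda/3}$ on the first piece (whose output frequencies are $\gtrsim\lambda$) and from $\|\mbp_{\geq\lambda/2}a\|_{L^\infty}\lesssim\lambda^{-1}\|a\|_{C^2}$ (via $L^\infty\hookleftarrow W^{1,2+}$ Sobolev and one factor of $|\nabla|$) on the second. Splitting the output $a\,\mbp_{\geq\lambda}f$ into high and low frequencies, as you do, is the same bookkeeping once you observe that only $\mbp_{\geq\lambda/4}a$ can produce low output frequencies; your Bernstein bound $\|\mbp_{\geq\lambda/4}a\|_{C^0}\lesssim\lambda^{-2}\|a\|_{C^2}$ is stronger than necessary but harmless.
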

\begin{proof}
	See Lemma B.1 of \cite{Buckmuster_Vicol}. 
	In fact,
	\begin{align*}
		\big\| |\nabla|^{-1} \mbp_{\neq 0} (a \mbp_{\geq \lambda} f) \big\|_{L^p} \leq & 
		\big\| |\nabla|^{-1} \mbp_{\geq {\lambda}/{3}} \big( (\mbp_{\leq {\lambda}/{2}} a) (\mbp_{\geq \lambda} f) \big) \big\|_{L^p} 
		+ \big\| |\nabla|^{-1} \mbp_{\neq 0}  \big( (\mbp_{\geq {\lambda}/{2}} a) (\mbp_{\geq \lambda} f) \big) \big\|_{L^p} 		\\
		\lesssim & \lambda^{-1} \| (\mbp_{\leq {\lambda}/{2}} a) (\mbp_{\geq \lambda} f)\|_{L^p} + \| (\mbp_{\geq {\lambda}/{2}} a) (\mbp_{\geq \lambda} f) \|_{L^p} \\
		\lesssim & \lambda^{-1} \| a \|_{L^\infty} \| \mbp_{\geq {\lambda}} f \|_{L^p} + \| \mbp_{\geq {\lambda}/{2}} a \|_{L^\infty} \| \mbp_{\geq {\lambda}} f \|_{L^p} \\
		\lesssim & \lambda^{-1} \big( \|a\|_{L^\infty} + \lambda \| \mbp_{\geq {\lambda}/{2}} a \|_{W^{1,2+}} \big) \| \mbp_{\geq \lambda} f \|_{L^p} \\
		\lesssim & \lambda^{-1} \big( \|a\|_{L^\infty} + \| \nabla \mbp_{\geq {\lambda}/{2}} a \|_{W^{1,2+}} \big) \| \mbp_{\geq \lambda} f \|_{L^p} \\
		\lesssim & \lambda^{-1} \big( \| a \|_{L^\infty} + \| \nabla^2 a \|_{L^\infty} \big) \| f\|_{L^p}. \qedhere
	\end{align*}
\end{proof}

Now we shall settle an expression formula for $\oR_{q+1}$.
In fact, noting that both $(v_\ell, p_\ell, \Rls)$ and $(v_{q+1}, p_{q+1}, \oR_{q+1})$ solve \eqref{2.1_NSR},
and using the definitions \eqref{3.29_wq_def}--\eqref{3.32_wqt_def}, 
one can get
\begin{align*}
	\divg \oR_{q+1} = & \pt v_{q+1} + \divg (v_{q+1} \ootimes v_{q+1}) + \nabla p_{q+1} + \nu (-\Delta)^\theta v_{q+1} \\
	= & \big( \pt v_\ell + \divg (v_\ell \ootimes v_\ell) + \nabla p_\ell + \nu (-\Delta)^\theta v_\ell - \divg \Rls \big) \\
	& + \pt \big(  \wqp + \wqc + \wqt \big) + \divg \big( v_\ell \ootimes w_{q+1} + w_{q+1} \ootimes v_\ell \big)\\
	& + \divg \big( \wqp \ootimes \wqp + (\wqc + \wqt) \ootimes w_{q+1} + \wqp \ootimes (\wqc + \wqt) \big) \\
	& + \nabla (p_{q+1} - p_\ell) + \nu (-\Delta)^\theta w_{q+1} + \divg \Rls.
\end{align*}
Thus, as in \cite{Buckmuster_Vicol}, if we denote
\begin{align}
	\Rl = & \opR \Big( \pt \wqp + \pt \wqc + \nu (-\Delta)^\theta w_{q+1} \Big) +  v_\ell \ootimes w_{q+1} + w_{q+1} \ootimes v_\ell,	\label{3.51_Rl} \\
	\Rc = & \Big( (\wqc+\wqt) \ootimes w_{q+1} + \wqp \ootimes (\wqc + \wqt) \Big), \label{3.52_Rc} \\
	\Ro = & \wqp \ootimes \wqp + \Rls + \pt \opR \wqt,
\end{align}
we can choose
\begin{equation} \label{3.54_R}
	\oR_{q+1} = \opR \divg \big( \Rl + \Rc + (\Ro - p^* \id) + (p_{q+1} - p_\ell + p^*) \id \big)
\end{equation}
for $p^*$ to be chosen later.
Then obviously, if we properly choose $p_{q+1}$, we have
\begin{equation} \label{3.58+1}
	\supp_t \oR_{q+1} \subseteq \supp_t w_{q+1} \cup \supp_t \Rls \subseteq N_{2\ell} (\supp_t R_q).
\end{equation}

For $\Rc$, by \eqref{3.44_wqcwqt_infp}--\eqref{3.43_w_infp} and \eqref{3.46_w_cN}, we have
\begin{align}
	\linfp{\opR \divg \Rc} 
	\lesssim & \linfp{\Rc} \notag \\
	\lesssim & \big( \linftwop{\wqc} + \linftwop{\wqt} \big) \cdot (\linftwop{w_{q+1}} + \linftwop{\wqp} )\notag \\
	\lesssim & \ell^{-8} \big( \sigma r + \mu^{-1} r \big) r^{2-\frac{2}{p}} \label{3.58+2}
\end{align}
and
\begin{align}
	& \CNN{\opR \divg \Rc}{1} \notag \\
	\ls & (\cNN{\wqp}{2} + \cNN{\wqc}{2} + \cNN{\wqt}{2}) \cdot (\linfinf{\wqp} + \linfinf{\wqc} + \linfinf{\wqt}) \notag \\
	\ls & \ell^{-8} r \lamqp^2 \cdot \ell^{-4} r \ls \ell^{-12} r^2 \lamqp^2. \label{7.9+}
\end{align}

Meanwhile, for $\Rl$ by \eqref{3.33} and \eqref{3.3+_est_bpsi}, \eqref{3.25_eta_est}, \eqref{3.39}, it holds that
\begin{align*}
	\linfp{\opR (\pt \wqp + \pt \wqc) } = & \Linfp{\opR \pt \nabla^\perp \big( \sumk a_\dir \eta_\dir \psi_\dir \big)} \\
	\lesssim & \Linfp{ \sumk \pt (a_\dir \eta_\dir) \psi_\dir} \\
	\lesssim & \ell^{-2} \sigma \mu r^{2-\frac{2}{p}}.
\end{align*}
By \eqref{3.43_w_infp}, \eqref{3.46_w_cN} and \eqref{+.8_vl_CN},
\begin{gather*}
	\linfp{ \opR (-\Delta)^\theta w_{q+1} } \ls \linfp{w_{q+1}}^{1-\theta_*} \linfp{\nabla w_{q+1}}^{\theta_*} \lesssim \ell^{-4} \lamqp^{\theta_*} r^{1-\frac{2}{p}}, \\
	\linfp{ v_\ell \ootimes w_{q+1} + w_{q+1} \ootimes v_\ell} \lesssim \cNN{v_\ell}{1} \linfp{w_{q+1}} \lesssim \ell^{-1} r^{1-\frac{2}{p}},
\end{gather*}
where $\theta_*$ is defined by \eqref{3.58+3}.
Thus, 
\begin{equation} \label{3.56_Rl_Est}
	\linfp{\Rl} \lesssim \ell^{-2} \sigma \mu r^{2-\frac{2}{p}} + \ell^{-4} \lamqp^{\theta_*} r^{1-\frac{2}{p}}.
\end{equation}
{Also} by \eqref{3.46_w_cN} and \eqref{+.8_vl_CN}, one has
\begin{align}
	\cNN{\Rl}{1} \ls & \cNN{\pt \wqp + \pt \wqc}{1} + \cNN{w_{q+1}}{2} + \cNN{v_\ell}{1} \cNN{w_{q+1}}{1} \notag\\
	\ls & \ell^{-8} r \lamqp^2 + \ell^{-5} r \lamqp \notag \\
	\ls & \ell^{-8} r \lamqp^2.  \label{7.11+}
\end{align}

At last, we shall get the estimates for $\Ro$,
which is the main part in the convex integration scheme.
By the definition \eqref{3.30_wqp_def} of $\wqp$, 
and noting \eqref{3.34}, \eqref{3.21+}, {one has}
\begin{align*}
	\wqp \ootimes \wqp + \Rls = & \sumkk a_\dir(t,x) a_{\dir'}(t,x) \mbw_\dir(t,x) \ootimes \mbw_{\dir'}(t,x) + \Rls \\
	= & \sumkk a_\dir a_{\dir'} \mbp_{\neq 0} \big(\mbw_\dir \ootimes \mbw_{\dir'} \big) 
\end{align*}
and 
\begin{align*}
	& \divg \big( \sumkk a_\dir a_{\dir'} \mbp_{\neq 0} (\mbw_\dir \ootimes \mbw_{\dir'}) \big) \\
	= & \divg \big( \sumkk a_\dir a_{\dir'} \mbp_{\geq (\lamqp \sigma)/{2}} (\mbw_\dir \ootimes \mbw_{\dir'}) \big) \\
	= & \frac{1}{2} \sumkk \mbp_{\neq 0} \Big( \nabla(a_\dir a_{\dir'}) \cdot \mbp_{\geq (\lamqp \sigma)/{2}} (\mbw_\dir \ootimes \mbw_{\dir'} + \mbw_{\dir'} \ootimes \mbw_{\dir} )\Big) \\
	& + \frac{1}{2} \sumkk \mbp_{\neq 0} \Big( a_\dir a_{\dir'} \divg \mbp_{\geq (\lamqp \sigma)/{2}} (\mbw_\dir \ootimes \mbw_{\dir'} + \mbw_{\dir'} \ootimes \mbw_{\dir} )\Big) \\
	:= & \frac{1}{2} \sumkk \big( \mce{\dir,\dir',1} + \mce{\dir,\dir',2} \big),
\end{align*}
Among these terms, by Lemma \ref{Lem:3.11}, and noting \eqref{3.24_mbw_est},
\begin{align}
	\linfp{\opR \mce{\dir,\dir',1}} \lesssim & \Linfp{ |\nabla|^{-1} \mce{\dir,\dir',1}} \notag \\
	\lesssim & (\lamqp \sigma)^{-1} \cNN{a_\dir a_{\dir'}}{3} \Linfp{ \mbw_\dir \ootimes \mbw_{\dir'}} \notag \\
	\lesssim & \ell^{-8} (\lamqp \sigma)^{-1} \linftwop{\mbw_\dir} \linftwop{\mbw_{\dir'}} \lesssim \frac{\ell^{-8}}{\lamqp \sigma} r^{2-\frac{2}{p}}. \label{3.57_E1_Est}
\end{align}

Since we use stationary 2D flow instead of the Beltrami flow in 3D,
we shall use a process slightly different from the one in \cite{Buckmuster_Vicol} and \cite{Luo_Titi} 
to estimate $\mce{\dir,\dir',2}$, 
see also Lemma 4 of \cite{Choffrut_DeLellis_Szekelyhidi}.
Noting the definition of $b_\dir$ and $\psi_\dir$, \eqref{3.1_def_bpsi}, 
and that $\dir,\dir' \in \Lambda \subset \mbs^1$,
it is direct to check that
\begin{align*}
	&	\big( \dir^\perp \ootimes  \dir'^\perp +\dir'^\perp \ootimes \dir^\perp  \big) (\dir+\dir') \\
	= &  (\dir \cdot \dir' -1 ) (\dir+\dir') \\
	= &  (\dir^\perp \cdot \dir'^\perp -1) (\dir+\dir').
\end{align*}
Thus, 
\begin{align*}
	& \divg \big( b_\dir \ootimes b_{\dir'} + b_{\dir'} \ootimes b_\dir \big) \\
	= & \divg \big( b_\dir \otimes b_{\dir'} + b_{\dir'} \otimes b_\dir - b_\dir \cdot b_{\dir'} \id  \big) \\
	= & - i \lamqp \big( \dir^\perp \otimes  \dir'^\perp +\dir'^\perp \otimes \dir^\perp - \dir^\perp \cdot \dir'^\perp \id \big) (\dir+\dir') \rme^{i \lamqp (\dir+\dir')\cdot x} \\
	= & i \lamqp  (\dir+\dir') \rme^{i \lamqp (\dir+\dir')\cdot x} \\
	= & \nabla \big(  \lamqp^2 \psi_\dir \psi_{\dir'} \big),
\end{align*}
and 
\begin{align*}
	& \divg \big( \mbw_\dir \ootimes \mbw_{\dir'} +  \mbw_{\dir'} \ootimes \mbw_{\dir}  \big) \\
	= & \big( b_\dir \ootimes b_{\dir'} + b_{\dir'} \ootimes b_\dir \big) \nabla (\eta_\dir \eta_{\dir'}) - \eta_\dir \eta_{\dir'} \nabla \big(  \lamqp^2 \psi_\dir \psi_{\dir'}  \big).
\end{align*}

Then for $\mce{\dir,\dir',2}$, if $\dir+\dir' \neq 0$, due to \eqref{3.21},
\begin{align*}
	& a_\dir a_{\dir'} \mbp_{\geq (\lamqp \sigma)/{2}} \divg \Big( \mbw_\dir \ootimes \mbw_{\dir'} +  \mbw_{\dir'} \ootimes \mbw_{\dir}  \Big) \\
	= & a_\dir a_{\dir'} \mbp_{\geq (\lamqp \sigma)/{2}} \big( (b_\dir \ootimes b_{\dir'} + b_{\dir'} \ootimes b_\dir ) \nabla(\eta_\dir \eta_{\dir'}) \big) \\
	& - \nabla \Big((a_\dir a_{\dir'}) \mbp_{\geq (\lamqp \sigma)/{2}}  \big(\eta_\dir \eta_{\dir'} \big( \lamqp^2 \psi_\dir \psi_{\dir'} \big) \big)\Big) \\
	& + \nabla(a_\dir a_{\dir'}) \cdot \mbp_{\geq (\lamqp \sigma)/{2}}  \big(\eta_\dir \eta_{\dir'} \big( \lamqp^2 \psi_\dir \psi_{\dir'} \big) \big)\\
	& + a_\dir a_{\dir'} \mbp_{\geq (\lamqp \sigma)/{2}}  \big( \nabla(\eta_\dir \eta_{\dir'}) \big( \lamqp^2 \psi_\dir \psi_{\dir'} \big) \big)\\
	:= & \mce{\dir,\dir',2,1} + \mce{\dir,\dir',2,2} + \mce{\dir,\dir',2,3} + \mce{\dir,\dir',2,4}.
\end{align*}
Among these terms, $\mce{\dir,\dir',2,2}$ can be added to the $p^* \id$ term,
$\mce{\dir,\dir',2,3}$ can be estimated as $\mce{\dir,\dir',1}$.
Moreover, as \eqref{3.20+_eta_FrePro}, by the definitions \eqref{3.1_def_bpsi} and \eqref{3.17_eata_def},
for the case $\dir+\dir'\neq 0$, 
we can change the projector $\mbp_{\geq (\lambda \sigma)/{2}}$ in $\mce{\dir,\dir',2,1}$ and $\mce{\dir,\dir',2,4}$
into $\mbp_{\geq {\lamqp}/{10}}$.
Then using Lemma \ref{Lem:3.11} and noting \eqref{3.25_eta_est}, 
\begin{align*}
	& \linfp{\opR \mbp_{\neq 0} \mce{\dir,\dir',2,1}} \\
	\lesssim & \Linfp{ |\nabla|^{-1} \mbp_{\neq 0} \Big( a_\dir a_{\dir'} \mbp_{\geq \lamqp/10} \big( b_\dir \ootimes b_{\dir'} + b_{\dir'} \ootimes b_\dir \big) \nabla(\eta_\dir \eta_{\dir'}) \Big)} \\
	\lesssim & \lamqp^{-1} \cNN{a_\dir a_{\dir'}}{2} \| b_\dir \|_{L^\infty_{t,x}}\| b_{\dir'} \|_{L^\infty_{t,x}} \big( \linftwop{\eta_\dir} \linftwop{\nabla \eta_{\dir'}} + \linftwop{\nabla \eta_\dir} \linftwop{\eta_{\dir'}} \big) \\
	\lesssim & \ell^{-6} \sigma r^{3-\frac{2}{p}}.
\end{align*}
Similarly,
\[
	\Linfp{\opR \mbp_{\neq 0} \mce{\dir,\dir',2,4}} \lesssim \ell^{-6} \sigma r^{3-\frac{2}{p}}.
\]
Thus, for $\dir+\dir'\neq 0$,
\begin{equation} \label{3.58_E2_Est}
	\Linfp{\opR \mce{\dir,\dir',2}} \lesssim \Big( \frac{\ell^{-8}}{\lamqp \sigma} + \ell^{-6} \sigma r \Big) r^{2-\frac{2}{p}}.
\end{equation}

Next, for the case $\dir+\dir'=0$ namely, for $\mce{\dir,-\dir,2}$ with $\dir \in \Lambda$,
we have 
$$ \nabla \big( \lamqp^2 \psi_\dir \psi_{\dir'} \big) = 0, $$ 
and by \eqref{3.18_eta_est}
\begin{align*}
	& \divg \big( \mbw_\dir \ootimes \mbw_{-\dir} + \mbw_{-\dir} \ootimes \mbw_\dir \big) \\
	= & \big( b_\dir \ootimes b_{-\dir} + b_{-\dir} \ootimes b_\dir \big) \nabla (\eta_\dir \eta_{-\dir}) \\
	= & 2 (\dir^\perp \ootimes \dir^\perp) \nabla \eta_\dir^2 = \big( \id - 2 \dir\otimes \dir \big) \nabla \eta_\dir^2 \\
	= & \Big( \nabla \eta_\dir^2 - 2 \big( (\dir\cdot \nabla) \eta_\dir^2 \big) \dir \Big) \\
	= & \Big( \nabla\eta_\dir^2 \mp 2 \frac{1}{\mu} \dir \pt \eta_\dir^2 \Big) \quad \text{for}\ \dir \in \Lambda^\pm.
\end{align*}
Thus, for $\dir \in \Lambda^\pm$, 
\begin{align*}
	\mce{\dir,-\dir,2} = & \mbp_{\neq 0} \Big(  a_\dir^2 \mbp_{\geq (\lamqp \sigma)/2} \big( \nabla \eta_\dir^2 \mp 2 \frac{1}{\mu} \dir \pt \eta_\dir^2 \big) \Big) \\
	= & \nabla \big( a_\dir^2 \mbp_{\geq (\lamqp \sigma)/2} \eta_\dir^2 \big) - \mbp_{\neq 0} \Big( (\nabla a_\dir^2) \mbp_{\geq (\lamqp \sigma)/2} \eta_\dir^2 \Big) \\
	& \mp 2 \mu^{-1} \dir \pt \mbp_{\neq 0} \big( a_\dir^2 \mbp_{\geq (\lamqp \sigma)/2} \eta_\dir^2 \big) \pm 2 \mu^{-1} \dir \mbp_{\neq 0} \big( (\pt a_\dir^2) \mbp_{\geq (\lamqp \sigma)/2} \eta_\dir^2 \big){.}
\end{align*}
Noting the definition of $\wqt$, \eqref{3.32_wqt_def}, \eqref{3.20+_eta_FrePro}, and that
\[
	\id - \mbp_H = \nabla \Delta^{-1} \nabla\cdot,
\]
one has
\begin{align*}
	& \frac{1}{2} \sumk \mce{\dir,-\dir,2} + \pt \wqt \\
	= & \Big(- \sumk \mu^{-1} \dir \nabla \Delta^{-1} \divg \pt \mbp_{\neq 0} \big( a_\dir^2 \mbp_{\geq (\lamqp \sigma)/2} \eta_\dir^2 \big) 
	+ \frac{1}{2}\nabla \big(a_\dir^2 \mbp_{\geq (\lamqp \sigma)/2} \eta_\dir^2\big) \Big) \\
	& + \Big( - \frac{1}{2} \sumk  \mbp_{\neq 0} \big(  \nabla a_\dir^2  \mbp_{\geq (\lamqp \sigma)/2}  \eta_\dir^2 \big) 
	\pm  \sumk  \mu^{-1} \dir \mbp_{\neq 0} \big(  \pt a_\dir^2 \mbp_{\geq (\lamqp \sigma)/2}  \eta_\dir^2 \big) \Big) \\
	:= & \mce{\dir,-\dir,2,1} + \mce{\dir,-\dir,2,2}.
\end{align*}
Here, $\mce{\dir,-\dir,2,1}$ can be added to the pressure term, 
and $\mce{\dir,-\dir,2,2}$ can be estimated with Lemma \ref{Lem:3.11} as
\begin{align}
	\Linfp{ \opR \mce{\dir,-\dir,2,2}} \lesssim & \frac{\ell^{-8}}{\lamqp \sigma}  \linftwopsq{\eta_\dir} \notag \\
	\lesssim & \frac{\ell^{-8}}{\lamqp \sigma} r^{2-\frac{2}{p}}. \label{3.66+}
\end{align}
Thus, combining \eqref{3.57_E1_Est}, \eqref{3.58_E2_Est} and \eqref{3.66+} yields,
\begin{equation} \label{3.63}
	\Linfp{\opR \divg ( \Ro -p^* \id ) } \lesssim \big( \frac{\ell^{-8}}{\lamqp \sigma} + \ell^{-6} \sigma r \big) r^{2-\frac{2}{p}},
\end{equation}
for
\begin{align*}
	p^* = & - \sum_{ \substack{\dir,\dir' \in \Lambda\\ \dir + \dir' \neq 0}} (a_\dir a_{\dir'}) \mbp_{\geq (\lamqp \sigma)/{2}}  \big(\eta_\dir \eta_{\dir'} \big( \lamqp^2 \psi_\dir \psi_{\dir'} \big) \big) \\
	& - \sumk \mu^{-1} \dir \Delta^{-1} \divg \pt \mbp_{\neq 0} \big( a_\dir^2 \mbp_{\geq (\lamqp \sigma)/2} \eta_\dir^2 \big) 
	+ \frac{1}{2} \big(a_\dir^2 \mbp_{\geq (\lamqp \sigma)/2} \eta_\dir^2\big).
\end{align*}
{Also} by \eqref{3.39}, \eqref{3.25_eta_est},
\begin{align}
	& \CNN{\opR \divg ( \Ro -p^* \id ) }{1} \notag \\
	\ls & \sumkk \CNN{\mce{\dir,\dir',1}}{2} + \sum_{ \substack{\dir,\dir' \in \Lambda\\ \dir + \dir' \neq 0}} \big(
	\CNN{\mce{\dir,\dir',2,1}}{2} + \CNN{\mce{\dir,\dir',2,3}}{2} + \CNN{\mce{\dir,\dir',2,4}}{2} \big) \notag \\
	& \quad + \sumk \CNN{\mce{\dir,-\dir,2,2}}{2} \notag \\
	\ls & 	\cNN{a_\dir}{3} \cNN{a_\dir}{0} (\cNN{\nabla^3 \eta_\dir}{0} + \cNN{\nabla^2 \pt \eta_\dir}{0}) \cNN{\eta_\dir}{0} \notag \\
	& \qquad \cdot \big( \cNN{b_\dir}{2} \cNN{b_\dir}{0} + \lamqp^3 \cNN{\psi_\dir}{2} \cNN{\psi_\dir}{0} \big) \notag \\
	\ls & \ell^{-8} \lamqp^5 \sigma^3 r^4 \mu. \label{7.15+}
\end{align}

Summing up \eqref{3.58+2}--\eqref{7.9+}, \eqref{3.56_Rl_Est}--\eqref{7.11+} and \eqref{3.63}--\eqref{7.15+},
we can get
\begin{align}
	\linfp{\oR_{q+1}} \lesssim & \ell^{-8} \Big( \sigma \mu + \sigma r + \mu^{-1} r + (\lamqp \sigma)^{-1} \Big) r^{2-\frac{2}{p}} + \ell^{-4} \lamqp^{\theta_*}  r^{1-\frac{2}{p}}, \label{7.16} \\
	\CNN{\oR_{q+1} }{1} \ls &  \ell^{-12} r^2 \lamqp^2 + \ell^{-8} \lamqp^5 \sigma^3 r^4 \mu. \label{7.17}
\end{align}

At last, we choose the parameters specifically as
\begin{equation} \label{3.64_parameter_choice}
	r = \lamqp^{1 - 6 \alpha},  \quad \mu = \lamqp^{1 - 4\alpha}, \quad \sigma = \lamqp^{-(1-2\alpha)},
\end{equation}
with $\alpha \in \mbq^+$ defined in \eqref{7.16+_alpha},
and choose $1 < p < 2$ such that
\[
	(1-6\alpha) (2-\frac{2}{p}) = \alpha,
\]
namely,
\[
	p = \frac{2-12\alpha}{2-13\alpha} \in (1,2) \quad \text{and} \quad r^{2-\frac{2}{p}} = \lamqp^\alpha.
\]
Then we can check that $r, \sigma, \mu$ satisfy the requirements in \eqref{3.19+_parameters}.
By choosing $A \in 5 \mbn$ large enough,
one can get \eqref{2.3_b_Rq_l1_est} and \eqref{2.3_c_RC1_est}.
And \eqref{3.46_w_cN} yields \eqref{2.3_a_vqC1_est}.
Meanwhile, by \eqref{3.39+} and \eqref{3.58+1},
we can get \eqref{2.4_suppv};
by \eqref{3.42_wqp_inf2}--\eqref{3.44_wqcwqt_infp},
we can get \eqref{2.5_L2Increase};
and by \eqref{+.11} and \eqref{3.43_w_infp},
we can get \eqref{2.6_WIncrease},
which completes the proof of Lemma \ref{Lem:2.1}.

\section*{Acknowledgments}
The authors would like to thank Professor Zhouping Xin for his encouragement and supports.

\appendix

\section{Geometric Lemma}
In this part, we shall give an elementary proof to Lemma \ref{Lem:3.2}.
In fact, if we set
\[
	\Gamma(s) = \left\{\begin{aligned}
		s+1, & \quad s > 0, \\
		1, & \quad s \leq 0,
	\end{aligned}\right.
\]
and 
\[
	\Gamma_* (s) = \Gamma * \tilde\varphi (s)
\]
with an even function $\tilde \varphi$ satisfying
\[
	\tilde{\varphi} \geq 0, \quad  \tilde{\varphi} \in C^\infty(\mbr), \quad \supp \tilde{\varphi} \subseteq (-1,1).
\]
Then we have 
\begin{gather*}
	\Gamma^* \in C^\infty(\mbr), \\
	1 \leq \Gamma_* \leq s+2, \quad \forall\, s \in \mbr.
\end{gather*}
And since
\[
	\Gamma(s) \cdot 1 + \Gamma(-s) \cdot (-1) = s, \quad \forall\, s \in \mbr,
\]
we have
\[
	\Gamma_*(s) \cdot 1 + \Gamma_*(-s) \cdot (-1) = s, \quad \forall\, s \in \mbr.
\]
Thus, for each
\[
	\oR = \begin{pmatrix}
		\oR_{11} & \oR_{12} \\
		\oR_{12} & -\oR_{11}
	\end{pmatrix}
\]
it is direct to check
\begin{align*}
	-\oR = &  \big( \frac{25}{14} \Gamma_*(-\oR_{11}) + \frac{25}{48} \Gamma_*(\oR_{12}) \big) \big( \dir_1^\perp \ootimes \dir_1^\perp + \dir_{-1}^\perp \ootimes \dir_{-1}^\perp \big) \\
	& + \big( \frac{25}{14} \Gamma_*(-\oR_{11}) + \frac{25}{48} \Gamma_*(-\oR_{12}) \big) \big( \dir_2^\perp \ootimes \dir_2^\perp + \dir_{-2}^\perp \ootimes \dir_{-2}^\perp \big) \\
	& + \big( \frac{25}{14} \Gamma_*(\oR_{11}) + \frac{25}{48} \Gamma_*(\oR_{12}) \big) \big( \dir_3^\perp \ootimes \dir_3^\perp + \dir_{-3}^\perp \ootimes \dir_{-3}^\perp \big) \\
	& + \big( \frac{25}{14} \Gamma_*(\oR_{11}) + \frac{25}{48} \Gamma_*(-\oR_{12}) \big) \big( \dir_4^\perp \ootimes \dir_4^\perp + \dir_{-4}^\perp \ootimes \dir_{-4}^\perp \big),
\end{align*}
where
\begin{gather*}
	\dir_1 = \frac{1}{5}(3e_1 + 4e_2), \; \dir_2 = \frac{1}{5}(3e_1 - 4e_2), \; \dir_3 = \frac{1}{5}(4e_1 + 3e_2), \; \dir_4 = \frac{1}{5}(4e_1 - 3e_2) \in \Lambda^+, \\
	\dir_{-1} = \frac{1}{5}(-3e_1 - 4e_2), \; \dir_{-2} = \frac{1}{5}(-3e_1 + 4e_2), \; \dir_{-3} = \frac{1}{5}(-4e_1 - 3e_2), \; \dir_{-4} = \frac{1}{5}(-4e_1 + 3e_2) \in \Lambda^-. 
\end{gather*}
And we can choose our smooth functions as follows
\begin{align*}
	& \gamma_{\dir_1}(\oR) = \gamma_{\dir_{-1}}(\oR) = \sqrt{\frac{25}{14} \Gamma_*(-\oR_{11}) + \frac{25}{48} \Gamma_*(\oR_{12})}, \\
	& \gamma_{\dir_2}(\oR) = \gamma_{\dir_{-2}}(\oR) = \sqrt{\frac{25}{14} \Gamma_*(-\oR_{11}) + \frac{25}{48} \Gamma_*(-\oR_{12})}, \\
	& \gamma_{\dir_3}(\oR) = \gamma_{\dir_{-3}}(\oR) = \sqrt{\frac{25}{14} \Gamma_*(\oR_{11}) + \frac{25}{48} \Gamma_*(\oR_{12})}, \\
	& \gamma_{\dir_4}(\oR) = \gamma_{\dir_{-4}}(\oR) = \sqrt{\frac{25}{14} \Gamma_*(\oR_{11}) + \frac{25}{48} \Gamma_*(-\oR_{12})}.
\end{align*}
Moreover, by the bounds of $\Gamma_*$, it is obvious that \eqref{3.14+} holds.



\end{document}